\newtheorem{theorem}{Theorem}[section]
\newtheorem{lemma}[theorem]{Lemma}
\newtheorem{cor}[theorem]{Corollary}
\theoremstyle{definition}
\theoremstyle{remark}
\newtheorem{remark}[theorem]{Remark}
\numberwithin{equation}{section}
\newcommand{\K}{\mathbb K}
\title[Invariant subspaces for Fréchet spaces without continuous norm]{Invariant subspaces for Fréchet spaces without continuous norm}
\author[Q. Menet]{Quentin Menet}
\address{Quentin Menet, Département de Mathématique, Université de Mons, 20 Place du Parc, 7000 Mons, Belgique}
\email{quentin.menet@umons.ac.be}
\subjclass[2010]{47A15; 47A16}
\keywords{Invariant subspaces, Fréchet spaces}
\thanks{Quentin Menet is a Research Associate of the Fonds de la
Recherche Scientifique - FNRS and was supported by the grant ANR-17-CE40-0021 of the French National 
Research Agency ANR (project Front)}
\subjclass[2010]{47A15; 47A16}
\begin{document}
\begin{abstract}
Let $(X,(p_j))$ be a Fréchet space with a Schauder basis and without continuous norm, where $(p_j)$ is an increasing sequence of seminorms inducing the topology of $X$.
We show that $X$ satisfies the Invariant Subspace Property if and only if there exists $j_0\ge 1$ such that $\ker p_{j+1}$ is of finite codimension in $\ker p_{j}$ for every $j\ge j_0$. 
\end{abstract}
\maketitle
\section{Introduction}

A Fréchet space $X$ satisfies the Invariant Subspace Property if every operator $T\in L(X)$ possesses a non-trivial invariant subspace, i.e. a closed subspace $M$ different from $\{0\}$ and $X$ such that $TM\subset M$. It is an important question in functional analysis to determine if a space $X$ satisfies this property. Thanks to Enflo and Read, we know that there exist separable infinite-dimensional Banach spaces that do not satisfy the Invariant Subspace Property~\cite{Enflo1, Enflo2, Read1}. For instance, the spaces $\ell_1(\mathbb{N})$ and $c_0(\mathbb{N})$ do not satisfy this property~\cite{Read2, Read3}. On the other hand, there exist separable infinite-dimensional Banach spaces that satisfy the Invariant Subspace Property~\cite{Argyros1}. However, a characterization of Banach spaces satisfying the Invariant Subspace Property is still far from being found. In particular, it is not known if separable infinite-dimensional Hilbert spaces satisfy the Invariant Subspace Property and this problem is called the Invariant Subspace Problem.

Fréchet spaces are a natural generalization of Banach spaces where the topology is not necessarily given by one norm but can be given by an increasing sequence of norms or, more generally, by an increasing sequence of seminorms. These spaces can be divided into three families:
\begin{itemize}
\item Banach spaces;
\item Non-normable Fréchet spaces with continuous norm;
\item Fréchet spaces without continuous norm.
\end{itemize}
Moreover, Fréchet spaces without continuous norm $(X,(p_j))$ can still be divided into two subfamilies regarding the codimension of $\ker p_{j+1}$ in $\ker p_{j}$.

We can therefore wonder which Fréchet spaces satisfy the Invariant Subspace Property. Both behaviors can also be found among (non-normable) Fréchet spaces. There exist separable infinite-dimensional non-normable Fréchet spaces that satisfy the Invariant Subspace Property~\cite{Korber,Shields} and a classical example is given by $\omega$, which is the space of all complex sequences endowed with the seminorms $p_j(x)=\max_{n\le j}|x_n|$. On the other hand, there exist separable infinite-dimensional non-normable Fréchet spaces that do not satisfy the Invariant Subspace Property~\cite{Atzmon, Atzmon2}. Classical examples of such spaces are the space of entire functions $H(\mathbb{C})$ and the Schwartz space of rapidly decreasing functions $s$~\cite{Golinski1, GolinskiPhd, Golinski2}.

A first study trying to determine which non-normable Fréchet spaces satisfy the Invariant Subspace Property was recently carried out~\cite{Menet}. For separable infinite-dimensional non-normable Fréchet spaces with continuous norm, it was shown that a lot of them do not satisfy the Invariant Subspace Property and for Fréchet spaces without continuous norm $(X,(p_j))$, it was proved that if for every $j\ge j_0$, $\ker p_{j+1}$ is a subspace of finite codimension in $\ker p_j$ then $X$ satisfies the Invariant Subspace Property. This last result means that if $Y$ is a Fréchet space with a continuous norm then $\omega\oplus Y$ satisfies the Invariant Subspace Property. In particular, the spaces $\omega\oplus\ell_1(\mathbb{N})$ and $\omega\oplus\ell_2(\mathbb{N})$ satisfy the Invariant Subspace Property. However, we could not determine if a Fréchet space without continuous norm that do not meet the above condition satisfies the Invariant Subspace Property. The goal of this paper consists in investigating these spaces in order to characterize Fréchet spaces without continuous norm that satisfy the Invariant Subspace Property.

A first result concerning this subfamily of Fréchet spaces without continuous norm was obtained by Goli\'{n}ski and Przestacki~\cite{Golinski3}. Indeed, they proved that the space of smooth functions on the real line does not satisfy the Invariant Subspace Property. We will use several key ideas of this paper to show the following result.

\begin{theorem}\label{technique}
Let $(X,(p_j))$ be a Fréchet space with a Schauder basis and without continuous norm, where $(p_j)$ is an increasing sequence of seminorms inducing the topology of $X$.
The space $X$ satisfies the Invariant Subspace Property if and only if there exists $j_0\ge 1$ such that $\ker p_{j+1}$ is of finite codimension in $\ker p_{j}$ for every $j\ge j_0$. 
\end{theorem}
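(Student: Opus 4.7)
The ``if'' direction is the theorem of~\cite{Menet} recalled in the introduction, so I focus on the converse. Assume the condition fails: for every $j_0 \geq 1$ there exists $j \geq j_0$ such that $\ker p_{j+1}$ has infinite codimension in $\ker p_j$. The plan is to construct an operator $T \in L(X)$ without non-trivial closed invariant subspace, adapting the Read-type scheme that Goli\'nski and Przestacki developed in~\cite{Golinski3} for $C^\infty(\mathbb{R})$.

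I would begin with a normalization: by extracting a subsequence of $(p_j)$---which yields an equivalent system of seminorms---one may assume that $\ker p_{j+1}$ has infinite codimension in $\ker p_j$ for every $j \geq 1$; in particular each $\ker p_j$ is infinite-dimensional. Using the Schauder basis $(e_n)$ and its coordinate functionals, I would then select an auxiliary sequence $(f_n)_{n \geq 1}$ of vectors built from the basis, with an assigned level $j(n) \to \infty$ such that $f_n \in \ker p_{j(n)}$ and the $f_n$ are linearly independent modulo each $\ker p_{j(n)+1}$; the infinite-codimension hypothesis guarantees that such a sequence exists and provides an infinite supply of independent directions inside every $\ker p_j$.

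Next, I would define $T \in L(X)$ by a Read-type recursion, using the $(f_n)$ as correction vectors in the spirit of~\cite{Golinski3}. Rapidly growing integer parameters $(a_n), (b_n), (c_n)$ would act as cut-offs; the correction vectors introduced at stage $j$ of the recursion are chosen with level $\geq j$, so that they lie in $\ker p_{j-1}$ and contribute nothing to $p_{j-1}(T\,\cdot\,)$. The orbit of a fixed vector $e_1$ would be engineered to be dense in $X$, while Read's standard backward--forward machinery ensures that every nonzero $x \in X$ can be mapped, by some polynomial in $T$, arbitrarily close to $e_1$. Combining these two properties would yield cyclicity of every nonzero vector, and hence $T$ would have no non-trivial closed invariant subspace.

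The main obstacle is the simultaneous control of continuity with respect to every $p_j$ and of cyclicity for every nonzero $x$. Continuity forces the corrections introduced at high stages of the recursion to lie deep inside the kernels, whereas cyclicity requires infinitely many independent correction directions; the infinite-codimension hypothesis is exactly what reconciles these demands, by supplying infinitely many independent classes in each $\ker p_j / \ker p_{j+1}$. The technical heart of the proof is then the coordinated choice of the parameters $(a_n), (b_n), (c_n)$ together with the correction vectors, so that the standard Read estimates close simultaneously in every seminorm; the growth rates must be carefully calibrated against the levels $j(n)$ used in the construction.
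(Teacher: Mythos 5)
Your high-level strategy matches the paper's: reduce (by passing to a subsequence of seminorms) to the case that every $\ker p_{j+1}$ has infinite codimension in $\ker p_j$, run a Read-type recursion on a reordering of scalar multiples of the Schauder basis, use basis vectors in $\ker p_j \setminus \ker p_{j+1}$ as correction directions (these form the sets $E_j$ of the paper, and Theorem~\ref{Schauder2} is needed to guarantee they are infinite), and calibrate the growth so that continuity and cyclicity close simultaneously. Citing \cite{Golinski3} as the model is the right pointer.

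However, you have not identified the obstruction that blocked this case in \cite{Menet} and that the paper's construction is built to overcome: the compactness of the auxiliary sets. The Read cyclicity argument rests on Lemma~\ref{P}, which requires a \emph{compact} subset $K$ of the finite-dimensional span $\text{span}(u_0,\dots,u_{t-1})$; in the presence of a continuous norm one takes $K_n = \{y : p_1(y) \le 3/2,\ p_1(\tau_n y) \ge 1/2\}$ and compactness is automatic. Here $p_1$ vanishes on a large part of $\text{span}(u_0,\dots,u_{\Delta_{n+1}-1})$ --- indeed most $u_j$ are deliberately chosen \emph{inside} $\ker p_1$ --- so these sets are unbounded, not compact, and the lemma cannot be invoked; the ``backward--forward machinery'' you appeal to does not close. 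The paper's central device, taken from \cite{Golinski3}, is to replace $p_N$ by the auxiliary functionals $\|\cdot\|_N$ of \eqref{normN}, which are genuine norms on $\text{span}\{u_j : j \ge 0\}$ but are not continuous on $X$; the $K_n$ are then defined via $\|\cdot\|_1$ and are compact. This choice propagates through the whole construction: it forces the extra kernel-compatibility condition \eqref{finalcont4}, the $\|\cdot\|_{N_n}$-smallness conditions \eqref{final1}--\eqref{final2}, and the final transfer $p_N \le \|\cdot\|_N$ in Lemma~\ref{finalresult}. Your ``main obstacle'' paragraph discusses the continuity-versus-cyclicity tension, which is standard in Read-type constructions; the genuinely new difficulty here is compactness in the absence of a continuous norm, and your sketch does not address it.
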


In view of Theorem 2.1 in~\cite{Menet}, one of the two implications is already known, and we only need to prove that if for every $j_0\ge 1$, there exists $j\ge j_0$ such that $\ker p_{j+1}$ is of infinite codimension in $\ker p_{j}$ then $X$ does not satisfy the Invariant Subspace Property. In other words, if for every $j_0\ge 1$, there exists $j\ge j_0$ such that $\ker p_{j+1}$ is of infinite codimension in $\ker p_{j}$, we have to be able to construct an operator on $X$ without non-trivial invariant subspaces. This will be done in Section~\ref{section3}.

An interesting application of Theorem~\ref{technique} concerns Fréchet spaces $X^{\mathbb{N}}$ where if $(X,(q_j))$ is a Fréchet space, we endow the space $X^{\mathbb{N}}$ with the seminorms $p_j((x_n)_n)=\max\{q_j(x_n):n\le j\}$. 

\begin{cor}\label{cor}
Let $X$ be a Fréchet space with a Schauder basis. The space $X^{\mathbb{N}}$ satisfies the Invariant Subspace Property if and only if $X$ is finite-dimensional or isomorphic to $\omega$.
\end{cor}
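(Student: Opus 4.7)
The plan is to apply Theorem~\ref{technique} to $Y:=X^{\mathbb{N}}$ with the seminorms $p_j$ of the statement, and then translate the resulting codimension condition into a structural condition on $X$.

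First, I would verify that Theorem~\ref{technique} applies to $Y$. The space $Y$ has no continuous norm: every $p_j$ vanishes on $\{(x_n)_n : x_n=0 \text{ for } n\le j\}$, which is nontrivial, and any continuous seminorm on $Y$ is dominated by some constant multiple of $p_j$. The space $Y$ also inherits a Schauder basis from $X$: writing $(e_n)$ for a Schauder basis of $X$ and letting $f_{n,k}\in X^{\mathbb{N}}$ denote the element with $e_n$ in position $k$ and $0$ elsewhere, I would enumerate $(f_{n,k})_{n,k\ge 1}$ along anti-diagonals. Convergence of the partial sums in each seminorm $p_j$ reduces to convergence of the $(e_n)$-expansions in $(X,q_j)$ at each of the finitely many positions $k\le j$, and uniqueness is immediate from projecting onto each coordinate.

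Next I would compute the relevant kernels and codimensions. One has
\[
\ker p_j = (\ker q_j)^{j} \times \prod_{n>j} X,
\]
and by comparing with $\ker p_{j+1}$ position by position,
\[
\dim\bigl(\ker p_j/\ker p_{j+1}\bigr) = j\,\dim(\ker q_j/\ker q_{j+1}) + \dim(X/\ker q_{j+1}).
\]
Since $\ker q_j/\ker q_{j+1}$ embeds in $X/\ker q_{j+1}$, this codimension is finite if and only if $\dim(X/\ker q_{j+1})<\infty$. Hence Theorem~\ref{technique} gives: $Y$ satisfies the Invariant Subspace Property if and only if there exists $j_0$ such that $\dim(X/\ker q_j)<\infty$ for every $j\ge j_0+1$.

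Finally, it remains to identify this last condition with "$X$ is finite-dimensional or isomorphic to $\omega$". For one direction, $X$ is the projective limit of the Banach spaces $X/\ker q_j$, and if these are finite-dimensional for $j$ large then the sequence $d_j=\dim(X/\ker q_j)$ is non-decreasing; if it is bounded then separation $\bigcap_j\ker q_j=\{0\}$ forces some $\ker q_j=\{0\}$ and $X$ is finite-dimensional, while if $d_j$ is unbounded, passing to a subsequence and choosing compatible bases of the quotients identifies $X$ with $\omega$. Conversely, if $X$ is finite-dimensional every quotient is too, and if $X\cong \omega$ every continuous seminorm is dominated by $\max_{n\le N}|x_n|$ for some $N$, so its kernel has finite codimension. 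The main subtlety is the classical $\omega$-characterization of the last step, but the bulk of the work is already encoded in Theorem~\ref{technique}; the remaining computations are routine.
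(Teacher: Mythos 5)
Your proposal is correct and follows essentially the same route as the paper: apply Theorem~\ref{technique} to $X^{\mathbb{N}}$, translate the codimension condition on $\ker p_j/\ker p_{j+1}$ into the condition that every $\ker q_j$ has finite codimension in $X$, and identify that with $X$ being finite-dimensional or isomorphic to $\omega$. Your explicit formula $\dim(\ker p_j/\ker p_{j+1}) = j\,\dim(\ker q_j/\ker q_{j+1}) + \dim(X/\ker q_{j+1})$ merely spells out in detail what the paper argues more tersely.
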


In particular, we deduce that the space $(\ell_2(\mathbb{N}))^{\mathbb{N}}$ does not satisfy the Invariant Subspace Property. 

\section{Properties of Schauder basis}

The goal of this paper consists in showing that if $(X,(p_j))$ is a Fréchet space with a Schauder basis and without continuous norm such that $\ker p_{j+1}$ is of infinite codimension in $\ker p_{j}$ for infinitely many $j$ then there exists an operator $T$ on $X$ such that each non-zero vector $x$ in $X$ is cyclic for $T$, i.e. $\text{span}\{T^nx:n\ge 0\}$ is dense in $X$ (so that $T$ admits no non-trivial invariant subspace). We will always assume in this paper that the sequence $(p_j)$ inducing the topology of $X$ is increasing. 

The construction of such an operator will rely on the ideas of Read and requires the existence of a Schauder basis. Therefore we first recall the following inequalities satisfied by Schauder basis.

\begin{theorem}[{\cite[Theorem 6 (p. 298)]{Jarchow}}]\label{Schauder}
Let $(X,(p_j))$ be a Fréchet space with a Schauder basis $(e_n)_{n\ge 0}$. Then for every $j\ge 1$, there exist $C_j>0$ and $J\ge 1$ such that for every $M\le N$, for every $x_0,\dots,x_N\in \K$, 
\[p_j\Big(\sum_{n=0}^{M}x_n e_n\Big)\le C_j p_J\Big(\sum_{n=0}^{N}x_n e_n\Big).\]
\end{theorem}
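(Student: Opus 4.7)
The plan is to show that the partial sum projections $P_M : \sum_n x_n e_n \mapsto \sum_{n=0}^M x_n e_n$ form an equicontinuous family of operators on $X$; from this the inequality follows by applying $P_M$ to the vector $x=\sum_{n=0}^N x_n e_n$, since then $P_M x = \sum_{n=0}^M x_n e_n$ whenever $M\le N$. The proof goes through the open mapping theorem, in the classical Banach-space style of renorming by the basis constants, adapted to the Fr\'echet setting.

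First I would introduce, for every $j\ge 1$, the auxiliary seminorm
\[q_j(x)=\sup_{M\ge 0}p_j(P_M x).\]
Because $(e_n)$ is a Schauder basis, the partial sums $P_M x$ converge to $x$, so $(p_j(P_M x))_M$ is bounded and $q_j(x)$ is finite; clearly $p_j(x)\le q_j(x)$ and the sequence $(q_j)$ is increasing since $(p_j)$ is. The key technical step is to check that $(X,(q_j))$ is again a Fr\'echet space. Given a sequence $(x^{(k)})_k$ that is Cauchy with respect to every $q_j$, one observes that the coordinate functionals $e_n^*$ are bounded by $q_j$ in a uniform way (since $e_n^*(x)\,e_n=P_n x-P_{n-1}x$ and the basis vectors are fixed), hence the scalar sequences $e_n^*(x^{(k)})$ converge to some $a_n$, and the sequence $(x^{(k)})$ is also Cauchy, hence convergent in $(X,(p_j))$, to some $x$ whose coefficients must be the $a_n$. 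A standard $\varepsilon/2$ argument then shows that $q_j(x^{(k)}-x)\to 0$.

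Once completeness is established, the identity map $\mathrm{id}:(X,(q_j))\to(X,(p_j))$ is a continuous bijection between Fr\'echet spaces, so the open mapping theorem yields its continuous inverse: for every $j\ge 1$ there exist $J\ge 1$ and $C_j>0$ such that $q_j(x)\le C_j\,p_J(x)$ for all $x\in X$. Applied to $x=\sum_{n=0}^N x_n e_n$, which satisfies $P_M x=\sum_{n=0}^M x_n e_n$ for every $M\le N$, this gives exactly
\[p_j\Big(\sum_{n=0}^M x_n e_n\Big)=p_j(P_M x)\le q_j(x)\le C_j\,p_J\Big(\sum_{n=0}^N x_n e_n\Big).\]

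The main obstacle is the completeness of $(X,(q_j))$: one must transfer the $(p_j)$-Cauchy information to uniform control of all partial sums simultaneously, which requires carefully exploiting the fact that each $P_M$ has finite-dimensional range (so that convergence of the coefficients and of the vector itself forces convergence of every truncation). Everything else is then formal.
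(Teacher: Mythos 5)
The paper does not prove this statement itself; it is cited from Jarchow, and what you have reconstructed is indeed the standard argument (renorm by the basis, then open mapping theorem). The overall structure is correct, but there is a subtle circularity in the completeness step that needs to be repaired.

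You argue that the $q$-Cauchy sequence $(x^{(k)})$ is also $p$-Cauchy, hence $p$-converges to some $x$ ``whose coefficients must be the $a_n$.'' That last claim is exactly what you cannot yet assert: it requires the coordinate functionals $e_n^*$ (equivalently, the projections $P_M$) to be continuous for the \emph{original} topology $(p_j)$, and that continuity is a consequence of the theorem you are proving. You have only established that $e_n^*$ is $q$-continuous, which suffices to get the limits $a_n$, but does not let you identify them with $e_n^*(x)$. Taking $l\to\infty$ in $p_j(P_M(x^{(k)}-x^{(l)}))<\varepsilon$ to conclude $p_j(P_M(x^{(k)}-x))\le\varepsilon$ has the same problem, since it implicitly uses $P_M x^{(l)}\to P_M x$.

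The fix is to build the $q$-limit from the scalars $a_n$ rather than from $x$. From $q_j(x^{(k)}-x^{(l)})<\varepsilon$ for $k,l\ge K$ one gets, letting $l\to\infty$ in the \emph{finite} sum $P_M x^{(l)}=\sum_{n=0}^M e_n^*(x^{(l)})e_n$, that
\[
p_j\Big(P_M x^{(k)}-\sum_{n=0}^{M}a_n e_n\Big)\le\varepsilon\quad\text{for all }M,\ k\ge K.
\]
Comparing with the convergent sequence $(P_M x^{(K)})_M$ shows that $\big(\sum_{n=0}^M a_n e_n\big)_M$ is $p_j$-Cauchy for every $j$, hence $p$-converges to some $\tilde x$; by uniqueness of the Schauder expansion, $e_n^*(\tilde x)=a_n$ and so $P_M\tilde x=\sum_{n=0}^M a_n e_n$. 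The displayed inequality then reads $q_j(x^{(k)}-\tilde x)\le\varepsilon$ for $k\ge K$, which gives $q$-completeness; afterwards one observes $\tilde x=x$ by uniqueness of limits in the Hausdorff topology $(p_j)$. With this patch the open mapping argument goes through as you wrote it.

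Two minor points worth spelling out: one should note that $(X,(q_j))$ is Hausdorff because $p_j\le q_j$ and the $(p_j)$ already separate points; and the step ``$e_n^*$ is $q$-continuous'' uses that for each $n$ there is some $j$ with $p_j(e_n)>0$, again by the Hausdorff property of $(p_j)$.
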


These inequalities will be mainly used to get that if $x=\sum_{n=0}^{\infty}x_n e_n$ then
\[p_j(x_n e_n)=p_j(\sum_{m=0}^nx_me_m-\sum_{m=0}^{n-1}x_me_m)\le p_j(\sum_{m=0}^nx_me_m)+p_j(\sum_{m=0}^{n-1}x_me_m)\le 2C_jp_J(x).\]

The values $p_j(e_n)$ are thus directly affected by the properties of kernels of seminorms $(p_j)$.

\begin{theorem}\label{Schauder2}
Let $(X,(p_j))$ be a Fréchet space with a Schauder basis $(e_n)_{n\ge 0}$. 
If for every $j_0$, there exists $j\ge j_0$ such that $\ker p_{j+1}$ is a subspace of infinite codimension in $\ker p_{j}$ then for every $j_0$, there exists $j>j_0$ such that
\[|\{n\ge 0:p_{j_0}(e_n)=0\ \text{and}\ p_{j}(e_n)>0\}|=\infty.\]
\end{theorem}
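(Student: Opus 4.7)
Fix $j_0 \ge 1$. The idea is to show, by contradiction, that the index $j := k+1$ works, where $k$ is obtained from the hypothesis but fed with $J_0$ rather than $j_0$ itself, $J_0$ being the index provided by Theorem~\ref{Schauder} applied to $j_0$. Concretely, Theorem~\ref{Schauder} yields $C_{j_0} > 0$ and (after enlarging if necessary) $J_0 \ge j_0$ with $p_{j_0}(x_n e_n) \le 2 C_{j_0} p_{J_0}(x)$ for every $x = \sum_n x_n e_n \in X$ and every $n$. The hypothesis, applied to $J_0$, then gives some $k \ge J_0$ such that $\ker p_{k+1}$ has infinite codimension in $\ker p_k$, and we set $j := k+1 > j_0$.

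Suppose for contradiction that the set
\[
A := \{n \ge 0 : p_{j_0}(e_n) = 0 \text{ and } p_j(e_n) > 0\}
\]
is finite. For any $x = \sum_n x_n e_n \in \ker p_k \subseteq \ker p_{J_0}$, the Schauder estimate forces $p_{j_0}(x_n e_n) = 0$ for every $n$; hence each nonzero coefficient $x_n$ corresponds either to an index in $A$ or to an index with $p_j(e_n) = 0$. Since $A$ is finite, we may separate a finite piece $z := \sum_{n \in A} x_n e_n$ from $x$: for $N \ge \max A$, the partial sum $\sum_{n \le N} x_n e_n - z$ involves only basis vectors with $p_j(e_n) = 0$, hence has vanishing $p_j$-seminorm by the triangle inequality. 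Passing to the limit, continuity of $p_j$ gives $p_j(x - z) = 0$, so every $x \in \ker p_k$ is congruent modulo $\ker p_j$ to an element of $\text{span}\{e_n : n \in A\}$. This forces $\dim(\ker p_k/\ker p_{k+1}) \le |A| < \infty$, contradicting the choice of $k$. Therefore $A$ must be infinite.

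\textbf{Main obstacle.} The delicate point is the correct choice of $k$: it cannot be produced directly from $j_0$ via the hypothesis, because controlling the individual basis coefficients of a vector in $\ker p_k$ through the $p_{j_0}$-seminorm requires first passing through the larger Schauder index $J_0$. Once this bootstrapping is in place the rest is a straightforward finiteness argument exploiting the fact that, under the contradiction hypothesis, only finitely many basis vectors can contribute nontrivially to the quotient $\ker p_k/\ker p_j$.
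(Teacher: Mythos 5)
Your proof is correct, and the underlying mechanism is the same as the paper's: fix $j_0$, use Theorem~\ref{Schauder} to bootstrap to an index $J_0\ge j_0$ at which the coefficient estimate $p_{j_0}(x_ne_n)\le 2C_{j_0}p_{J_0}(x)$ holds, apply the hypothesis at $J_0$ to find $k\ge J_0$ with $\ker p_{k+1}$ of infinite codimension in $\ker p_k$, and exploit that every $x\in\ker p_k$ has $p_{j_0}(x_ne_n)=0$ for all $n$. You phrase this as a contradiction — if $A$ is finite, the image of $\ker p_k$ in $X/\ker p_{k+1}$ is contained in the image of the finite-dimensional space $\mathrm{span}\{e_n:n\in A\}$, forcing finite codimension — whereas the paper argues directly, producing for each $N$ a vector $x^{(N)}\in\ker p_k\setminus\ker p_{k+1}$ supported on $\{e_n:n\ge N\}$ and extracting from it a fresh index $n\ge N$ in $A$. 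Both routes rest on the same two facts and differ only in whether they build the infinitely many indices explicitly or rule out finiteness via a dimension count of the quotient.
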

\begin{proof}
Let $j_0\ge 1$. It follows from Theorem~\ref{Schauder} that there exists $J>j_0$ and $C_{j_0}$ such that for every $n\ge 0$, every $x\in X$ with $x=\sum_{k=0}^{\infty}x_ke_k$, we have
\[p_{j_0}(x_n e_n)\le 2C_{j_0} p_J(x).\]
By assumption, there also exists $j\ge J$ such that $\ker p_{j+1}$ is of infinite codimension in $\ker p_j$. Therefore, for every $N\ge 0$, there exists $x^{(N)}=\sum_{n=N}^{\infty}x^{(N)}_n e_n\in X$ such that $p_{j}(x^{(N)})=0$ and $p_{j+1}(x^{(N)})>0$. Finally, since $p_{j+1}(x^{(N)})>0$, there exists $n\ge N$ such that $x^{(N)}_n\ne 0$ and $p_{j+1}(e_n)>0$, and we get
\[p_{j_0}(e_n)=\frac{p_{j_0}(x^{(N)}_ne_n)}{|x^{(N)}_n|}\le 2C_{j_0} \frac{p_J(x^{(N)})}{|x^{(N)}_n|}\le 2C_{j_0} \frac{p_j(x^{(N)})}{|x^{(N)}_n|}=0.\]
In other words, the set $\{n\ge 0:p_{j_0}(e_n)=0\ \text{and}\ p_{j+1}(e_n)>0\}$ is infinite.
\end{proof}

\section{Construction of an operator without invariant subspaces}\label{section3}

Let $(X,(p_j))$ be a Fréchet space with a Schauder basis $(e_n)_{n\ge 0}$ such that $\ker p_{j+1}$ is of infinite codimension in $\ker p_{j}$ for infinitely many $j$. In view of Theorems~\ref{Schauder} and \ref{Schauder2}, we can assume without loss of generality that $(p_j)$ is an increasing sequence of seminorms such that
for every $j\ge 1$, there exists $C_j\ge 1$ such that for every $M\le N$, every $x_0,\dots,x_N\in \K$, 
\begin{equation}\label{basis}
p_j\Big(\sum_{n=0}^{M}x_n e_n\Big)\le C_j p_{j+1}\Big(\sum_{n=0}^{N}x_n e_n\Big),
\end{equation}
and such that for every $j\ge 1$, 
\begin{equation}\label{ker}
|\{n\ge 0:p_{j}(e_n)=0\ \text{and}\ p_{j+1}(e_n)>0\}|=\infty.
\end{equation}
Moreover, we will assume that $(C_j)$ is an increasing sequence and for every $j\ge 1$, we will denote by $E_j$ the set $\{n\ge 0:p_{j}(e_n)=0\ \text{and}\ p_{j+1}(e_n)>0\}$ and by $E_0$ the set $\{n\ge 0:p_{1}(e_n)>0\}$.

Our construction of an operator without non-trivial invariant subspaces will be based on the construction of Read applied to multiples of the Schauder basis $(e_n)_{n\ge 0}$ which we will reorder in a suitable way. We will denote by $(u_n)_{n\ge 0}$ the sequence obtained by this reordering and we will then consider the map $T:\text{span}\{u_j:j\ge 0\}\to \text{span}\{u_j:j\ge 0\}$ defined by
\[T^{j}u_0=\left\lbrace
\begin{array}{ll}
u_j+T^{j-a_n}u_0 & \mbox{if $j\in [a_n,a_n+\Delta_n)$;}\\
u_j & \mbox{otherwise.}
\end{array}
\right.\]

The sequences  $(\Delta_n)_{n\ge 0}$ and $(a_n)_{n\ge 1}$ will be determined by induction such that $\Delta_0=0$, $\Delta_1=1$ and  $\Delta_{n+1}=a_n+\Delta_n$ for every $n\ge 1$ and we will assume that the sequence $(a_n)$ grows sufficiently rapidly to get
\begin{equation}\label{param}
2\Delta_{n+1}<a_{n+1}.
\end{equation}

For technical reasons, we will also need to consider a sequence $(N_n)_{n\ge 1}$ of positive integers such that for every $n\ge 1$, $N_n\le n$ and such that for every $i\in \mathbb{N}$, there exists an infinity of integers $n$ such that $N_n=i$.

The goal of this section consists in showing that for a convenient reordering $(u_n)$ of multiples of the Schauder basis $(e_n)$ and for a good choice of the sequence $(a_n)$, the operator $T$ can be extended on $X$ and does not possess non-trivial invariant subspaces. We will thus assume that $e_n=\lambda_{\sigma(n)} u_{\sigma(n)}$ where $\sigma$ is a bijection on $\mathbb{Z}_+$ and $\lambda_m\ne 0$ for every $m$. Moreover, we will select the elements $(u_j)$ so that for every $n\ge 2$, there exists $s_n\ge 0$ such that
\begin{equation}\label{tn}
\{\lambda_{0} u_0,\dots,\lambda_{\Delta_{n}-1} u_{\Delta_{n}-1}\}\supset\{e_0,\dots,e_{s_n}\}
\end{equation}
and  
\begin{equation}\label{tn2}
\{\lambda_{0} u_0,\dots,\lambda_{\Delta_{n}-1} u_{\Delta_{n}-1}\}\backslash\{e_0,\dots,e_{s_n}\}\subset \ker(p_{n+2}).
\end{equation}
Note that \eqref{tn} is equivalent to require that $[0,s_n]\subset \sigma^{-1}([0,\Delta_n))$ and that \eqref{tn2} means that $\sigma^{-1}([0,\Delta_n))\backslash [0,s_n]\subset \bigcup_{m\ge n+2}E_m$. Moreover, since $\sigma$ is a bijection, we have that $(s_n)$ tends to infinity. The construction of the reordering $(u_j)_{j\ge 1}$ will be divided along the intervals $[\Delta_n,\Delta_{n+1})$.

As usual, we note that $u_0$ is a cyclic vector for $T$ and that we can compute the vectors $Tu_j$ for every $j\ge 0$ from the iterates of $u_0$ under the action of $T$ as follows:

\[Tu_j=\left\lbrace
\begin{array}{ll}
u_{a_n}+u_0 & \mbox{if $j=a_n-1$;}\\
u_{a_n+\Delta_n}-u_{\Delta_n} & \mbox{if $j=a_n+\Delta_n-1$;}\\
u_{j+1} & \mbox{otherwise.}
\end{array}
\right.\]

The map $T$ can be extended continuously on $X$ under the following conditions.

\begin{lemma}[Continuity]\label{cont}
Assume that
\begin{enumerate}
\item for every $1\le l\le n$, every $j\in [\Delta_n,a_n)$, we have
\begin{equation}
p_{l+1}(u_{j})\ge 2^{j+1} p_{l}(u_{j+1}),
\label{finalcont1}
\end{equation}
and for every $1\le l\le n$, every $j\in [a_n,\Delta_{n+1}-1)$, we have
\begin{equation}
p_{l+1}(u_{j})\ge \Delta_n 2^{\Delta_n} p_{l}(u_{j+1}),
\label{finalcont1bis}
\end{equation}
\item for every $n\ge 1$, we have
\begin{equation}
p_{1}(u_{a_n-1})> 2^{a_n}p_{n}(u_0)
\label{finalcont2}
\end{equation}
\item for every $n\ge 1$, we have
\begin{equation}
p_{n}(u_{\Delta_n})=0,\quad p_{n+1}(u_{\Delta_n})>0 \quad \text{and} \quad p_{n+1}(u_{a_n+\Delta_n-1})>0.
\label{finalcont3}
\end{equation}
\item for every $j\ge 0$, every $l\ge 1$,
\begin{equation}
p_{l+1}(u_j)=0\quad \Rightarrow\quad p_{l}(u_{j+1})=0.
\label{finalcont4}
\end{equation}
\end{enumerate}
Then the map $T$ can be uniquely extended to a continuous operator on $X$ satisfying for every $N\ge 1$, every $x\in X$, 
\[p_N(Tx)\le 8C_{N+1}L_Np_{N+2}(x),\] where $C_{N+1}$ is given by \eqref{basis} and $L_N$ only depends on $\{u_0,\dots,u_{\Delta_{N+1}-1}\}$.
\end{lemma}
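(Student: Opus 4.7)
The plan is to define $Tx$ for general $x=\sum_n x_n e_n\in X$ as the limit of the images $Tx_M:=T(\sum_{n=0}^M x_n e_n)$, after showing that these partial images are $p_N$-Cauchy for every $N$. Using the relation $e_n=\lambda_{\sigma(n)}u_{\sigma(n)}$ and setting $y_m=x_{\sigma^{-1}(m)}\lambda_m$, I rewrite
\[Tx_M=\sum_{m\in\sigma([0,M])}y_m\,Tu_m,\]
and seek a uniform bound $p_N(Tx_M)\le 8C_{N+1}L_N p_{N+2}(x)$. Since the same bound applied to $T(x_M-x_{M'})$ gives the Cauchy property, convergence and continuity of the extension follow at once, and uniqueness from density of $\text{span}\{u_m\}$ in $X$.

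To obtain the uniform estimate, I split the sum over $m$ at the threshold $\Delta_{N+1}$. On the \emph{high} part $m\ge\Delta_{N+1}$, each $m$ lies in some $[\Delta_n,\Delta_{n+1})$ with $n\ge N+1$, and conditions \eqref{finalcont1}--\eqref{finalcont1bis} (with $l=N\le n$) give geometric decay of $p_N(u_{m+1})$ relative to $p_{N+1}(u_m)$. Combining with the Schauder basis bound $|y_m|\,p_{N+1}(u_m)\le 2C_{N+1}p_{N+2}(x)$ derived from \eqref{basis}, the standard terms $Tu_m=u_{m+1}$ contribute summable geometric series. The exceptional index $m=a_n-1$, where $Tu_m=u_{a_n}+u_0$, is handled by \eqref{finalcont1} for the $u_{a_n}$-term and by \eqref{finalcont2} together with monotonicity of $(p_j)$ (yielding $p_N(u_0)<2^{-a_n}p_N(u_{a_n-1})$) for the $u_0$-term. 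The exceptional index $m=\Delta_{n+1}-1$, where $Tu_m=u_{\Delta_{n+1}}-u_{\Delta_n}$, is handled by \eqref{finalcont3} together with the increase of $(p_j)$, which force $p_N(u_{\Delta_n})=p_N(u_{\Delta_{n+1}})=0$ for $n\ge N+1$; this contribution vanishes outright. Summing over $n\ge N+1$ bounds the high part by $C_{N+1}p_{N+2}(x)$ times a universal constant.

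On the \emph{low} part $m<\Delta_{N+1}$, I first check using \eqref{finalcont3} and \eqref{finalcont4} that whenever $p_{N+1}(u_m)=0$ we also have $p_N(Tu_m)=0$, case-by-case for each of the three shapes of $Tu_m$. This leaves only finitely many surviving indices $m<\Delta_{N+1}$ with $p_{N+1}(u_m)>0$; for those, the Schauder bound gives $|y_m|\le 2C_{N+1}p_{N+2}(x)/p_{N+1}(u_m)$, and the maximum of $p_N(Tu_m)/p_{N+1}(u_m)$ over those $m$ is a finite constant depending only on $\{u_0,\dots,u_{\Delta_{N+1}-1}\}$. Defining $L_N$ to absorb this maximum together with the numerical constants from the high part gives the claimed estimate.

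The main obstacle is the careful bookkeeping for the three shapes of $Tu_m$ and the subtle use of condition \eqref{finalcont4}: without it, the low-part indices $m<\Delta_{N+1}$ with $p_{N+1}(u_m)=0$ could not be eliminated from the sum, and $L_N$ would inevitably depend on infinitely many basis vectors, spoiling the statement.
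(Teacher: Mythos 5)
Your proof takes essentially the same route as the paper's: define $L_N$ as a finite maximum over a low block, use \eqref{finalcont4} to eliminate the seminorm-degenerate indices there, and use \eqref{finalcont1}--\eqref{finalcont3} for geometric decay and outright vanishing on the high block, followed by the Schauder bound from \eqref{basis}.

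One loose end needs patching. You split at $\Delta_{N+1}$, so the index $m=\Delta_{N+1}-1$ lands in your low part. For that $m$, $Tu_m=u_{\Delta_{N+1}}-u_{\Delta_N}$ involves $u_{\Delta_{N+1}}$, which lies outside $\{u_0,\dots,u_{\Delta_{N+1}-1}\}$; moreover $p_{N+1}(u_{\Delta_{N+1}-1})>0$ by \eqref{finalcont3}, so your filter does not remove this index and, a priori, the maximum defining $L_N$ would depend on the next block, contradicting your claim. What saves you (but must be stated) is that \eqref{finalcont3} at levels $N$ and $N+1$, together with the monotonicity of $(p_j)$, gives $p_N(u_{\Delta_N})=p_N(u_{\Delta_{N+1}})=0$, hence $p_N(Tu_{\Delta_{N+1}-1})=0$ and the ratio vanishes identically. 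Alternatively, split at $\Delta_{N+1}-1$ as the paper does, so that $m=\Delta_{N+1}-1$ is treated together with the $n\ge N$ instances of $m=a_n+\Delta_n-1$ in the high part. With that fix, the argument is correct and matches the paper's.
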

\begin{proof}
Let $N\ge 1$ and
\[L_N=\max\{2^jp_{N}(T u_{j})/p_{N+1}(u_j):p_{N+1}(u_{j})\ne 0,\ j<\Delta_{N+1}-1\}+1.\]
It follows that for every $j<\Delta_{N+1}-1$ such that $p_{N+1}(u_{j})\ne 0$, we get
\[p_N(Tu_j)\le 2^{-j}L_Np_{N+1}(u_{j}).\]
In particular, this is the case for $j=a_n-1$ with $n\le N$ in view of \eqref{finalcont2} and for $j=a_n+\Delta_n-1$ with $n<N$ in view of \eqref{finalcont3}.
Moreover, it follows from \eqref{finalcont4} that for every $j<\Delta_{N+1}-1$ such that $Tu_j=u_{j+1}$, we have
\[p_N(Tu_j)\le 2^{-j}L_Np_{N+1}(u_{j}).\]
We can thus conclude that for every $j<\Delta_{N+1}-1$,
\[p_N(Tu_j)\le 2^{-j}L_Np_{N+1}(u_{j}).\]

On the other hand, if $j\ge \Delta_{N+1}-1$ and $Tu_j=u_{j+1}$, it follows from \eqref{finalcont1} that if $j\in [\Delta_n,a_n-1)$ for some $n\ge N+1$ then 
\[p_N(Tu_j)\le 2^{-j}p_{N+1}(u_{j})\le 2^{-j}L_Np_{N+1}(u_{j})\]
and it follows from \eqref{finalcont1bis} that if $j\in [a_n,\Delta_{n+1}-1)$ for some $n\ge N+1$ then 
\[p_N(Tu_j)\le \Delta_{n}^{-1}2^{-\Delta_{n}}p_{N+1}(u_{j})\le \Delta_{n}^{-1}2^{-\Delta_{n}}L_Np_{N+1}(u_{j}).\]

Finally, if $j=a_n-1$ with $n\ge N+1$, it follows from \eqref{finalcont1} and \eqref{finalcont2} that
\begin{align*}
p_{N}(Tu_{a_n-1})=p_N(u_{a_n}+u_0)&\le 2^{-a_n}p_{N+1}(u_{a_n-1})+2^{-a_n}p_{1}(u_{a_n -1})\\
&\le 2^{-a_n+1} L_N p_{N+1}(u_{a_n-1})
\end{align*}
and if $j=a_n+\Delta_n-1$ with $n\ge N$, it follows from \eqref{finalcont3} that
\[p_{N}(Tu_{a_n+\Delta_n-1})=p_N(u_{\Delta_{n+1}}-u_{\Delta_n})=0.\]
Therefore, if we consider $x=\sum_{m=0}^{\infty}x_m e_m$, we can then let $Tx=\sum_{m=0}^{\infty}x_mTe_m$ and we have by \eqref{basis}
\begin{align*}
&p_N(Tx)\\
&\quad\le \sum_{m=0}^{\infty}p_N(T(x_m e_m))\\
&\quad\le L_N\left(\sum_{m=0}^{\infty}\frac{1}{2^{\sigma(m)}}p_{N+1}(x_m e_m)+\sum_{n=N+1}^{\infty}\sum_{m:\sigma(m)\in [a_n,\Delta_{n+1}-1)}\frac{1}{\Delta_n 2^{\Delta_n}}p_{N+1}(x_m e_m)\right)\\
&\quad\le 2L_N C_{N+1}\left(\sum_{m=0}^{\infty}\frac{1}{2^{\sigma(m)}}p_{N+2}(x)+\sum_{n=N+1}^{\infty}\sum_{m:\sigma(m)\in [a_n,\Delta_{n+1}-1)}\frac{1}{\Delta_n 2^{\Delta_n}}p_{N+2}(x)\right)\\
&\quad\le 8 L_N C_{N+1}p_{N+2}(x).
\end{align*}
\end{proof}

Before continuing the construction of our operator $T$, it is necessary to do some remarks concerning the above theorem. We first notice that for every $n\ge 1$, every $j\in [\Delta_n,a_n)$, if $\sigma^{-1}(j+1)\in E_n$ then \eqref{finalcont1} is satisfied. Such a choice will be done for every $j\in [\Delta_n,2\Delta_n)$ while we will choose $\sigma^{-1}(a_n-1)\in E_0$ so that \eqref{finalcont2} can be satisfied. This will not contradict \eqref{finalcont4}  since this last condition means that for every $j$, if $\sigma^{-1}(j)\in E_{m_j}$ and $\sigma^{-1}(j+1)\in E_{m_{j+1}}$ then $m_{j+1}\ge m_j-1$. The indexes $m_j$ can thus decrease from $n$ to $0$ on each block $[\Delta_n,a_n)$ if $a_n$ is sufficiently big. Moreover, \eqref{finalcont4} for $j=a_n+\Delta_n-1$ follows from \eqref{finalcont3} since $p_{n+1}(u_{a_n+\Delta_n-1})>0$ and $p_{n+1}(u_{a_n+\Delta_n})=0$. This fact will be important because we will fix the elements $u_j$ block by block along the intervals $[\Delta_n,\Delta_{n+1})$ and we want that the conditions to be satisfied by the elements in $[\Delta_n,\Delta_{n+1})$ do not depend from the elements $u_j$ for $j\ge \Delta_{n+1}$. It is also interesting to precise that we will fix the elements $u_j$ for $j\in [a_n,\Delta_{n+1})$ before knowing the exact value of $a_n$. This explains why the condition \eqref{finalcont1bis} is different to the condition \eqref{finalcont1}. Finally, in comparison to the case of Fréchet spaces with continuous norm, we needed to add \eqref{finalcont4} since we do not deal with norms. 


Given $P(T)=\sum_{n=0}^d \rho_n T^n$, we will denote by $|P|$ the sum $\sum_{n=0}^{d}|\rho_n|$. The following key lemma (see for instance \cite[Lemma 3]{Golinski2}) gives the existence of a finite family of polynomials $(P_l)_{l\le L}$ allowing to approach $u_0$ by $P_l(T)(y)$ for every $y$ in a suitable compact set. This will help us to show that every non-zero vector is cyclic under the action of $T$ since the vector $u_0$ is cyclic. In the case of Fréchet spaces with continuous norm~\cite{Menet}, the considered compact sets were given by  
\begin{equation*}
K_n=\Big\{y\in \text{span}(u_0,\dots,u_{t_{n}-1})~:~ p_{1}(y)\le 3/2\ \text{and}\  p_{1}(\tau_n y)\ge 1/2\Big\}
\end{equation*}
for some map $\tau$ and some increasing sequence $(t_n)$. Unfortunately, if $p_1$ is not a norm on $\text{span}(u_0,\dots,u_{t_{n}-1})$, these sets $K_n$ are not necessarily compact. This explains why in the paper~\cite{Menet}, we have restricted our study to Fréchet spaces with continuous norm. 

\begin{lemma}\label{P}
Let $\varepsilon>0$, let $a$ and $t$ be positive integers with $t>a$ and $(\gamma_0,\dots,\gamma_{t-1})$ be a perturbed canonical basis of $\text{\emph{span}}(u_0,\dots, u_{t-1})$ satisfying 
\[\gamma_0=u_0 \quad \text{and} \quad \gamma_a=\varepsilon u_a+u_0.\]
Let $\|\cdot\|$ be a norm on $span\{u_j:j\ge 0\}$ and $K\subset \text{\emph{span}}(u_0,\dots, u_{t-1})$ be a compact set in the induced topology such that $\nu:=a-\text{val}_{\gamma}(K)\ge 0$.\\
Then there is a number $D\ge 1$ satisfying
\[\sum_{j=0}^{t-1}|\lambda_j|\le D \quad\text{for every $y=\sum_{j=0}^{t-1}\lambda_j \gamma_j\in K$}\]
and a finite family of polynomials $(P_l)_{l=1}^L$ satisfying for every $1\le l\le L$,
\[ \text{\emph{val}}\ P_l\ge \nu,\quad \deg P_l< t \quad\text{and}\quad |P_l|\le D\]
such that for any $y \in K$ there is $1\le l\le L$ such that for each perturbed forward
shift $T:\text{\emph{span}}\{u_j:j\ge 0\}\to \text{\emph{span}}\{u_j:j\ge 0\}$ satisfying $T^ju_0=\gamma_j$ for every $1\le j\le t-1$, we have
\[\|P_l(T)y-u_0\|\le 2\varepsilon \|u_a\|+ D\max_{t\le j< 2t}\|T^ju_0\|.\]
\end{lemma}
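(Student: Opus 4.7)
The plan is to mimic the Read--Goli\'nski approach: encode each $y \in K$ as $y = Q_y(T) u_0$ where $Q_y(z) = \sum_{j=0}^{t-1} \lambda_j(y)\, z^j$, and then seek a polynomial $P$ satisfying $P(z)Q_y(z) \equiv z^a \pmod{z^t}$. Writing such a $P$ as $P(z)Q_y(z) = z^a + z^t S(z)$ with $\deg S < t$, and using that $T^a u_0 = \gamma_a = u_0 + \varepsilon u_a$ and $T^j u_0 = \gamma_j$ for $j<t$, one obtains
\[
P(T)y = (PQ_y)(T)u_0 = T^a u_0 + T^t S(T) u_0 = u_0 + \varepsilon u_a + \sum_{k=0}^{\deg S} s_k\, T^{t+k} u_0,
\]
so that $\|P(T)y - u_0\| \le \varepsilon\|u_a\| + |S|\max_{t\le j<2t}\|T^j u_0\|$, essentially the form required by the lemma.

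The uniform bound $\sum_j |\lambda_j(y)| \le D$ for $y \in K$ is immediate: the coordinate functionals are continuous on the finite-dimensional space $\text{span}(u_0,\dots,u_{t-1})$, and $K$ is compact. The candidate polynomial $P_y$ is produced by formal power-series inversion: writing $Q_y(z) = z^{v}\tilde Q_y(z)$ with $v = \text{val}_\gamma(y)$ and $\tilde Q_y(0) = \lambda_v(y) \neq 0$, the inverse $\tilde Q_y^{-1}$ exists in $\K[[z]]$, and truncating $z^{a-v}\tilde Q_y(z)^{-1}$ at degree $t-1$ gives a polynomial $P_y$ of valuation $a-v$, of degree less than $t$, with $P_y Q_y \equiv z^a \pmod{z^t}$. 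The built-in inequality $a-v \ge \nu$, which is exactly what the hypothesis $\nu = a - \text{val}_\gamma(K) \ge 0$ is designed to deliver, secures $\text{val}(P_y) \ge \nu$ automatically.

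The hard part is that $|P_y|$ is \emph{not} uniformly bounded on $K$: the coefficients of $\tilde Q_y^{-1}$ blow up inversely with $|\lambda_v(y)|$, and this quantity need not be bounded below on $K$. This is precisely what necessitates finitely many polynomials $(P_l)_{l=1}^L$ and the factor $2\varepsilon\|u_a\|$ (rather than $\varepsilon\|u_a\|$) in the error bound. For any fixed $y_0 \in K$, the polynomial $P_{y_0}$ still drives $P_{y_0}(T)y$ close to $u_0$ for every $y$ in a sufficiently small neighbourhood $U_{y_0}$ of $y_0$ in the induced topology, because
\[
P_{y_0}(T)y - u_0 = \bigl(P_{y_0}(T)y_0 - u_0\bigr) + P_{y_0}(T)(y - y_0),
\]
and the perturbation term $(P_{y_0}Q_{y-y_0})(T)u_0$ decomposes along the known vectors $\gamma_0,\dots,\gamma_{t-1}$ and the tail iterates $T^j u_0$ for $t\le j<2t$, with all coefficients tending to $0$ as $y \to y_0$ by the equivalence of norms on the finite-dimensional ambient space. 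Choosing $U_{y_0}$ small enough that this perturbation is bounded by $\varepsilon\|u_a\|$ plus a controlled multiple of $\max_{t\le j<2t}\|T^j u_0\|$, compactness of $K$ yields a finite subcover $U_{y^{(1)}},\dots,U_{y^{(L)}}$; setting $P_l := P_{y^{(l)}}$ and enlarging $D$ to dominate simultaneously all $|P_l|$ and all the tail-coefficient bounds delivers the family.
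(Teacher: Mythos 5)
Your argument is correct and reproduces the standard Read--Goli\'nski compactness argument; the paper does not prove this lemma itself but cites \cite[Lemma 3]{Golinski2}, and your reconstruction (formal inversion of $Q_y$ modulo $z^t$ to get $P_y$, followed by a finite subcover of $K$ to control $|P_y|$ uniformly) matches that approach. One point worth making explicit: the step $\text{val}(P_y)=a-\text{val}_\gamma(y)\ge\nu$ tacitly reads $\text{val}_\gamma(K)$ as $\max_{y\in K}\text{val}_\gamma(y)$, which is indeed the interpretation under which $\nu\ge 0$ guarantees each $P_y$ is a genuine polynomial and under which the stated lower bound on $\text{val}(P_l)$ follows.
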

\begin{remark}\text{\ }
We recall that by a perturbed canonical basis of $\text{span}(u_0,\dots, u_{t-1})$, we mean a family $(\gamma_0,\dots,\gamma_{t-1})$ satisfying for every $0\le j\le t-1$, $\gamma_j=\sum_{l=0}^j\mu_{j,l}u_l$ with $\mu_{j,j}\ne 0$ and by a forward shift $T:\text{span}\{u_j:j\ge 0\}\to \text{span}\{u_j:j\ge 0\}$, we mean an operator $T$ satisfying for every $j\ge 0$, $Tu_j=\sum_{l=0}^{j+1}\lambda_{j,l}u_l$ with $\lambda_{j,j+1}\ne 0$.
\end{remark}

Note that the norm $\|\cdot\|$ in the above lemma has only to be a norm on $\text{span}\{u_j:j\ge 0\}$ and not on $X$. This fact was successfully used by Golinski and Przestacki in their study of the space of smooth functions on the real line~\cite{Golinski3}. We adapt this idea and consider the sequence of compact sets $(K_n)_{n\ge 1}$ given by 
\begin{equation*}
K_n=\Big\{y\in \text{span}(u_0,\dots,u_{\Delta_{n+1}-1})~:~ \|y\|_1\le 3/2\ \text{and}\  \|\tau_n y\|_1\ge 1/2\Big\}
\end{equation*}
where, for every $x\in \text{span}\{u_j:j\ge 0\}=\text{span}\{e_n:n\ge 0\}$, if $x=\sum_{n=0}^{\infty}x_n e_n$, 
\begin{equation*}
\|x\|_1=p_{1}(x)+\sum_{j\ge 0}\frac{1}{C_{j+1}}\sum_{n\in E_j}\frac{1}{2^n} p_{j+1}(x_ne_n)
\end{equation*}
and where for every $n\ge 1$,
\begin{equation*}
\tau_n\Big(\sum_{j =0}^{\Delta_{n+1}-1}y_jT^ju_0\Big)=\sum_{j=0}^{a_{n}-1}y_jT^ju_0.
\end{equation*}
We remark that $\|\cdot\|_1$ is a well-defined norm on $\text{span}\{u_j:j\ge 0\}=\text{span}\{e_n:n\ge 0\}$ since $\bigcup_{j\ge 0}E_j=\mathbb{Z}_{+}$ and each set $K_n$ is thus compact. 

Lemma~\ref{P} will be applied to sets $K_n$ and to norms $\|\cdot\|_{N_n}$ where for every $N\ge 1$, the norm $\|\cdot\|_N$ is defined on $\text{span}\{e_n:n\ge 0\}$ by
\begin{equation}\label{normN}
\left\|x\right\|_N=p_{N}(x)+\sum_{j\ge 0}\frac{1}{C_{j+1}}\sum_{n\in E_j}\frac{1}{2^n} p_{j+1}(x_ne_n).
\end{equation} 
 This choice of norms allows us to get for every $x\in  \text{span}\{u_j:j\ge 0\}=\text{span}\{e_n:n\ge 0\}$, $
p_N(x)\le \|x\|_N$
but also to control $\|e_n\|_N$ by considering $n\in E_N$ with $n$ sufficiently big.

Since $a_n-\text{val}(K_n)\ge 1$ in the basis $\{u_0,\dots,T^{\Delta_{n+1}-1}u_0\}$ in view of the definition of $\tau_n$, we can apply Lemma~\ref{P} to $K_n$ in order to get the following corollary.

\begin{cor}\label{cor Pn}
For every $n\ge 1$, there exist $D_n\ge 1$ satisfying
\[\sum_{j=0}^{\Delta_{n+1}-1}|y_j|\le D_n \quad\text{for every $y=\sum_{j=0}^{\Delta_{n+1}-1}y_j T^ju_0\in K_n$}\]
and a family of polynomials $\mathcal{P}_n=(P_{n,k})_{k=1}^{k_n}$ satisfying 
\[\text{\emph{val}}(P_{n,k})\ge 1,\quad \deg(P_{n,k})<\Delta_{n+1} \quad\text{and}\quad |P_{n,k}|\le D_n\]
such that for each $y\in K_n$, there exists $1\le k\le k_n$ such that
\[\|P_{n,k}(T)y-u_0\|_{N_n}\le 2\|u_{a_n}\|_{N_n}+ D_n\max_{\Delta_{n+1}\le j< 2\Delta_{n+1}}\|u_j\|_{N_n}.\]
Moreover, $D_n$ and $\mathcal{P}_n$ only depend on $\{u_0,\dots,u_{\Delta_{n+1} -1}\}$.
\end{cor}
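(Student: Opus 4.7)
The plan is to obtain the corollary as a direct application of Lemma~\ref{P} with the specific choices $\varepsilon=1$, $a=a_n$, $t=\Delta_{n+1}$, the perturbed canonical basis $(\gamma_j)_{j=0}^{\Delta_{n+1}-1}=(T^ju_0)_{j=0}^{\Delta_{n+1}-1}$, the compact set $K=K_n$ and the norm $\|\cdot\|=\|\cdot\|_{N_n}$ defined in \eqref{normN}. All conditions of the lemma are built into the setup: by the explicit formula for $T^ju_0$ we have $\gamma_0=u_0$ and $\gamma_{a_n}=T^{a_n}u_0=u_{a_n}+u_0$, which is exactly $\varepsilon u_a+u_0$ with $\varepsilon=1$; the set $K_n$ is compact because $\|\cdot\|_1$ is a genuine norm on the finite-dimensional space $\text{span}(u_0,\dots,u_{\Delta_{n+1}-1})$; and the valuation condition $\nu=a_n-\text{val}_\gamma(K_n)\ge 1$ is immediate from the definition of $\tau_n$, since any $y\in K_n$ satisfies $\|\tau_n y\|_1\ge 1/2$, forcing some coefficient $y_j\ne 0$ with $j<a_n$.

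Applying Lemma~\ref{P} with these data yields a constant $D_n\ge 1$ bounding $\sum_{j=0}^{\Delta_{n+1}-1}|y_j|$ on $K_n$, and a finite family of polynomials $(P_{n,k})_{k=1}^{k_n}$ with $\text{val}(P_{n,k})\ge 1$, $\deg(P_{n,k})<\Delta_{n+1}$ and $|P_{n,k}|\le D_n$, such that for every $y\in K_n$ there is some $k$ with
\[\|P_{n,k}(T)y-u_0\|_{N_n}\le 2\|u_{a_n}\|_{N_n}+D_n\max_{\Delta_{n+1}\le j<2\Delta_{n+1}}\|T^ju_0\|_{N_n}.\]

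The only remaining point is to replace $\|T^ju_0\|_{N_n}$ by $\|u_j\|_{N_n}$ in the range $\Delta_{n+1}\le j<2\Delta_{n+1}$. From the definition of $T$, the identity $T^ju_0=u_j$ fails precisely when $j\in[a_m,a_m+\Delta_m)=[a_m,\Delta_{m+1})$ for some $m\ge 1$. The assumption \eqref{param}, namely $2\Delta_{n+1}<a_{n+1}$, guarantees $[\Delta_{n+1},2\Delta_{n+1})\subset[\Delta_{n+1},a_{n+1})$, an interval disjoint from every $[a_m,\Delta_{m+1})$. Hence $T^ju_0=u_j$ throughout the relevant range and the inequality takes the stated form.

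Finally, the ``only depend on'' assertion is essentially automatic: both $K_n$ (via $\|\cdot\|_1$ restricted to $\text{span}(u_0,\dots,u_{\Delta_{n+1}-1})$ and the map $\tau_n$) and the basis vectors $(T^ju_0)_{j<\Delta_{n+1}}$ are determined by $u_0,\dots,u_{\Delta_{n+1}-1}$ alone, since $T^ju_0$ for $j<\Delta_{n+1}$ involves only $u_0,\dots,u_j$. Consequently the outputs $D_n$ and $\mathcal{P}_n$ produced by Lemma~\ref{P} depend solely on those same vectors. The only step that requires any care is the identification in the last displayed inequality, and this is handled cleanly by \eqref{param}; the rest is bookkeeping.
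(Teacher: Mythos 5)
Your proof is correct and follows essentially the same route as the paper: the paper invokes Lemma~\ref{P} applied to $K_n$ in the perturbed canonical basis $\{u_0,Tu_0,\dots,T^{\Delta_{n+1}-1}u_0\}$, noting only that $a_n-\text{val}(K_n)\ge 1$ by the definition of $\tau_n$, which is exactly your valuation check. You additionally verify $\gamma_{a_n}=u_{a_n}+u_0$ (so $\varepsilon=1$), compactness of $K_n$, and the identification $T^ju_0=u_j$ for $\Delta_{n+1}\le j<2\Delta_{n+1}$ via~\eqref{param}; these are all correct and simply make explicit what the paper leaves implicit.
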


We recall that our goal consists in showing that under some conditions on the sequences $(a_n)$ and $(u_n)$, the operator $T$ has no non-trivial invariant subspace or, in other words, that every non-zero vector $x$ is cyclic under the action of $T$. Since, by definition of $T$, we already know that $u_0$ is cyclic, it will suffice to prove that for every $N\ge 1$, every $\varepsilon>0$, there exists a polynomial $Q$ such that $p_N(Q(T)x-u_0)<\varepsilon$. This polynomial $Q$ will be obtained thanks to Corollary~\ref{cor Pn}. In fact, we will show that every non-zero vector $x$ can be divided into two parts so that some multiple of the first part belongs to $K_n$ for some $n$ (Lemma~\ref{Kn}) and the second part does not perturb the desired estimation (Lemma~\ref{tail}).

\begin{lemma}[Sets $K_n$]\label{Kn}
If for every $n\ge 1$, every $j\in [a_{n},a_{n}+\Delta_{n})$, 
\begin{equation}
\label{K1}
\sigma^{-1}(j)\in E_{N_n} \quad\text{and}\quad p_{N_{n}+1}(u_j)\ge 2^{\Delta_n}\|T^{j-a_n}u_0\|_1,
\end{equation}
 then for every $x\in X\backslash\{0\}$, there exists $M>0$ such that for every integer $N\ge 1$, every increasing sequence $(n_k)$ such that $N_{n_k}=N$, we have that for all but finitely many $k$, there exists $M_k\ge M$ such that
\[\frac{\pi^{(u)}_{[0,\Delta_{n_k+1})}x}{M_k}\in K_{n_k}\]
where $\pi^{(u)}_{[0,\Delta_{n_k+1})}x=\sum_{l\in \sigma^{-1}([0,\Delta_{n_k+1}))}x_l e_l$ if $x=\sum_{l=0}^{\infty}x_l e_l$. 
\end{lemma}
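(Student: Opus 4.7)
My plan is to decompose $\pi := \pi^{(u)}_{[0,\Delta_{n_k+1})}x$ around its ``stable core'' $\pi^{(u)}_{[0,a_{n_k})}x$, bound this core below by a positive constant depending only on $x$, show that the two correction terms that link it to $\pi$ and to $\tau_{n_k}\pi$ both vanish as $k\to\infty$, and finally read off $M_k$ from the window $[\tfrac23\|\pi\|_1,\,2\|\tau_{n_k}\pi\|_1]$. This will automatically place $\pi/M_k$ inside $K_{n_k}$ while keeping $M_k\ge M$ for a fixed $M$ depending only on $x$.

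Fixing any $l^*$ with $x_{l^*}\neq 0$, letting $j^*$ be the unique integer with $l^*\in E_{j^*}$, and setting $c_x := p_{j^*+1}(x_{l^*}e_{l^*})/(C_{j^*+1}\,2^{l^*})>0$, bijectivity of $\sigma$ gives $\sigma(l^*)<a_{n_k}$ for $k$ large; reading off the corresponding summand in the definition of $\|\cdot\|_1$ then yields $\|\pi^{(u)}_{[0,a_{n_k})}x\|_1\ge c_x$. Next, the change of basis $u_m = T^m u_0 - T^{m-a_l}u_0$ on $m\in[a_l,a_l+\Delta_l)$ (and $u_m=T^m u_0$ otherwise), together with \eqref{param} (which forces $\Delta_l<a_{n_k}$ for every $l\le n_k$), shows that the coefficient of $T^j u_0$ in $\pi=\sum_m z_m u_m$, with $z_m := x_{\sigma^{-1}(m)}\lambda_m$, reduces to exactly $z_j$ for $j\in[a_{n_k},\Delta_{n_k+1})$. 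A short rearrangement then yields the two clean decompositions
\[\pi = \pi^{(u)}_{[0,a_{n_k})}x + \sum_{j=a_{n_k}}^{\Delta_{n_k+1}-1} z_j u_j,\qquad \tau_{n_k}\pi = \pi^{(u)}_{[0,a_{n_k})}x - \sum_{j=0}^{\Delta_{n_k}-1} z_{j+a_{n_k}} T^j u_0.\]

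The second correction is controlled directly by hypothesis \eqref{K1}, which gives $\|T^j u_0\|_1 \le 2^{-\Delta_{n_k}} p_{N+1}(u_{j+a_{n_k}})$ with $N:=N_{n_k}$; combined with the standard Schauder bound $p_{N+1}(x_l e_l)\le 2C_{N+1}p_{N+2}(x)$ from \eqref{basis}, it produces a total estimate of order $\Delta_{n_k}2^{-\Delta_{n_k}}$, which vanishes as $k\to\infty$ since $N$ is constant along the chosen subsequence. For the first correction, $z_j u_j = x_{\sigma^{-1}(j)}e_{\sigma^{-1}(j)}$ with $\sigma^{-1}(j)\in E_N$ by \eqref{K1}; hence $p_1$ kills this tail and only the index $j=N$ survives in the definition of $\|\cdot\|_1$, reducing it to $\tfrac{1}{C_{N+1}}\sum_{l\in A_k}2^{-l}p_{N+1}(x_l e_l)$ with $A_k := \sigma^{-1}([a_{n_k},\Delta_{n_k+1}))$. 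Since each fixed $l$ lies in only finitely many $A_k$ and the series $\sum_l 2^{-l}p_{N+1}(x_l e_l)$ converges (again by the Schauder estimate), dominated convergence drives this tail to $0$ as well.

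Once both tails are at most $c_x/4$ and $\|\pi^{(u)}_{[0,a_{n_k})}x\|_1\ge c_x$, the triangle inequality gives $\|\pi\|_1\le \tfrac54\|\pi^{(u)}_{[0,a_{n_k})}x\|_1$ and $\|\tau_{n_k}\pi\|_1\ge \tfrac34\|\pi^{(u)}_{[0,a_{n_k})}x\|_1$; the choice $M_k:=\tfrac23\|\pi\|_1$ then satisfies $\|\pi/M_k\|_1=3/2$, $\|\tau_{n_k}(\pi/M_k)\|_1\ge 9/10\ge 1/2$ and $M_k\ge c_x/2=:M$, so $\pi/M_k\in K_{n_k}$. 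The main obstacle is the bookkeeping that collapses the change-of-basis formula into the clean identity for $\tau_{n_k}\pi$; once it is in hand, everything reduces to two standard tail estimates, with the $2^{\Delta_{n_k}}$ factor of \eqref{K1} doing the heavy lifting on the $T^j u_0$-tail and the Schauder convergence taking care of the $u_j$-tail.
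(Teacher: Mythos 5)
Your proof is correct and follows essentially the same route as the paper's: split $\pi^{(u)}_{[0,\Delta_{n_k+1})}x$ at the index $a_{n_k}$, lower-bound the ``core'' piece $\pi^{(u)}_{[0,a_{n_k})}x$, use hypothesis \eqref{K1} to kill the $T^{j}u_0$-correction coming from $\tau_{n_k}$, and use the Schauder inequality and the location of the supporting indices inside $E_{N}$ to kill the $u_j$-correction. The differences are purely cosmetic: the paper chooses $M_k=\|\pi^{(u)}_{[0,\Delta_{n_k+1})}x\|_1$ (so $\|\pi/M_k\|_1=1$) and derives its lower bound $M$ from the non-$p_1$ part of the norm of an initial projection, while you take $M_k=\tfrac23\|\pi\|_1$ and a single dominant coordinate $c_x$; likewise the paper bounds the $u_j$-tail via $2^{-(s_{n_k}-1)}$ while you argue via disjointness of the sets $A_k$ and convergence of $\sum_l 2^{-l}p_{N+1}(x_le_l)$ -- these are equivalent.
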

\begin{proof}
We recall that \begin{equation*}
K_n=\Big\{y\in \text{span}(u_0,\dots,u_{\Delta_{n+1}-1})~:~ \|y\|_1\le 3/2\ \text{and}\  \|\tau_n y\|_1\ge 1/2\Big\}
\end{equation*}
and that
\begin{equation*}
\tau_n\Big(\sum_{j =0}^{\Delta_{n+1}-1}y_jT^ju_0\Big)=\sum_{j=0}^{a_{n}-1}y_jT^ju_0.
\end{equation*}
Let $n\ge 1$. We start by computing $\|\tau_n u_j\|_1$ for every $j\in [0,\Delta_{n+1})$.
\begin{itemize}
\item If $j< a_{n}$ then $\tau_n u_j=u_j$. 
\item If $j\in [a_n,a_n+\Delta_{n})$ then $\tau_n u_j=-T^{j-a_n}u_0$ and thus by \eqref{K1}
\[\|\tau_n u_j\|_1=\|T^{j-a_n}u_0\|_1\le \frac{1}{2^{\Delta_n}} p_{N_{n}+1}(u_j).\]
\end{itemize}
Let $y=\sum_{j=0}^{\Delta_{n+1}-1}y_ju_j$. We deduce that 
\begin{align*}
\left\|\tau_n \left(\sum_{j=a_n}^{\Delta_{n+1}-1}y_ju_j\right)\right\|_1&\le \sum_{j=a_n}^{a_n+\Delta_n-1}\|y_j \tau_n u_j\|_1\le \sum_{j=a_n}^{a_n+\Delta_n-1}\frac{1}{2^{\Delta_n}}p_{N_{n}+1}(y_j u_j)\\
&\le \sum_{j=a_n}^{a_n+\Delta_n-1}\frac{2C_{N_{n}+1}}{2^{\Delta_n}}p_{N_{n}+2}(y)\le \frac{2\Delta_n C_{N_{n}+1}}{2^{\Delta_n}}p_{N_{n}+2}(y)
\end{align*}
where $C_{N_{n}+1}$ is given by \eqref{basis}.
On the other hand, since $N_n\ge 1$ and $\sigma^{-1}(j)\in E_{N_{n}}$ for every $j\in [a_n,a_n+\Delta_n)$,  it follows from \eqref{tn} that 
\begin{align*}
\left\|\sum_{j=a_n}^{\Delta_{n+1}-1}y_ju_j\right\|_1&= \frac{1}{C_{N_{n}+1}}\sum_{j=a_n}^{a_n+\Delta_{n}-1}\frac{1}{2^{\sigma^{-1}(j)}} p_{N_{n}+1}(y_{j}u_{j})\\
&\le  \frac{1}{C_{N_{n}+1}}\sum_{j=a_n}^{a_n+\Delta_n-1} \frac{2C_{N_n+1}}{2^{\sigma^{-1}(j)}} p_{N_n+2}(y)\\
&\le  \sum_{k\ge \min(\sigma^{-1}[a_n,a_n+\Delta_n))}\frac{1}{2^{k-1}}p_{N_n+2}(y)\\
&\le \frac{1}{2^{s_{n}-1}}p_{N_n+2}(y).
\end{align*}
Let $x\in X\backslash\{0\}$ with $x=\sum_{l=0}^{\infty}x_l e_l$. We let $\pi^{(e)}_{[0,s]}(x)=\sum_{l=0}^sx_le_l$ for every $s\ge 0$. Let $M=\|\pi^{(u)}_{[0,\Delta_{m_0 +1})}x\|_{1}-p_1(\pi^{(u)}_{[0,\Delta_{m_0 +1})}x)$ where the index $m_0$ is chosen so that $M>0$. Let $N\ge 1$ and let $(n_k)$ be an increasing sequence such that $N_{n_k}=N$. We let $M_k=\|\pi^{(u)}_{[0,\Delta_{n_k +1})}x\|_{1}$ and we remark that $M_k\ge M$ when $n_k\ge m_0$. If $\pi^{(u)}_{[0,\Delta_{n_k +1})}x=\sum_{j=0}^{\Delta_{n_k +1}-1}y_ju_j$, it then follows from \eqref{tn} and \eqref{tn2} that
\begin{align*}
\|\tau_{n_k}\pi^{(u)}_{[0,\Delta_{n_k +1})}x\|_1&=\left\|\tau_{n_k}\left(\sum_{j=0}^{a_{n_k}-1}y_ju_j\right)+\tau_{n_k}\left(\sum_{j=a_{n_k}}^{\Delta_{n_k +1}-1}y_ju_j\right)\right\|_1\\
&\ge \left\|\sum_{j=0}^{a_{n_k}-1}y_ju_j\right\|_1-\left\|\tau_{n_k}\left(\sum_{j=a_{n_k}}^{\Delta_{n_k +1}-1}y_ju_j\right)\right\|_1\\
&\ge \|\pi^{(u)}_{[0,\Delta_{n_k +1})}x\|_1-\left\|\sum_{j=a_{n_k}}^{\Delta_{n_k +1}-1}y_ju_j\right\|_1-\frac{2\Delta_{n_k}C_{N+1}}{2^{\Delta_{n_k}}}p_{N+2}(\pi^{(u)}_{[0,\Delta_{n_k +1})}x)\\
& \ge M_k-\frac{1}{2^{s_{n_k}-1}}p_{N+2}(\pi^{(u)}_{[0,\Delta_{n_k +1})}x)-\frac{2\Delta_{n_k}C_{N+1}}{2^{\Delta_{n_k}}}p_{N+2}(\pi^{(u)}_{[0,\Delta_{n_k +1})}x)\\
&=M_k-\frac{1}{2^{s_{n_k}-1}}p_{N+2}(\pi^{(e)}_{[0,s_{n_k +1}]}x)-\frac{2\Delta_{n_k}C_{N+1}}{2^{\Delta_{n_k}}}p_{N+2}(\pi^{(e)}_{[0,s_{n_k +1}]}x)
\end{align*}
since $N=N_{n_k}\le n_k$ by definition of $(N_n)$. Finally, since for every $m$, the seqence $(p_m(\pi^{(e)}_{[0,s_{n_k +1}]}x))_k$ is bounded, we conclude that for all but finitely many $k$, $M_k\ge M$ and $\|\tau_{n_k}\pi^{(u)}_{[0,\Delta_{n_k +1})}x\|_1\ge \frac{M_k}{2}$, and therefore that
$\frac{\pi^{(u)}_{[0,\Delta_{n_k +1})}x}{M_k}\in K_{n_k}$.
\end{proof}

We recall that at the beginning of this section, we have assumed that the sequence $(C_j)$ given by \eqref{basis} is increasing. This assumption was done in order to simplify the statement of the following lemma. For the same reason, we will also assume that the sequence $(D_n)_{n\ge 1}$ given by Corollary~\ref{cor Pn} is increasing.
\begin{lemma}[Tails]\label{tail}
Assume that for every $n\ge 2$, every $j\in [\Delta_n,\Delta_{n+1})$,
\begin{enumerate}
\item for every $l\le n$, every $1\le r< \Delta_{n}$, if $j\in [\Delta_n, a_n)$ then
\begin{equation}
\label{tail1}
p_{l+1}(u_{j})\ge 2^{j+1}C_{n+1}D_{n-1}p_l(u_{j+r});
\end{equation}
and if $j\in [a_n,\Delta_{n+1})$ and $j+r<\Delta_{n+1}$ then
\begin{equation}
\label{tail1bis}
p_{l+1}(u_{j})\ge \Delta_n 2^{\Delta_n+1}C_{n+1}D_{n-1}p_l(u_{j+r});
\end{equation}
\item if $j\in [\Delta_{n},2\Delta_{n})$ then 
\begin{equation}
\label{tail2}
p_{n}(u_j)=0;
\end{equation} 
\item if $j\in [a_{n}-\Delta_{n},a_{n})$ then
\begin{equation}
\label{tail3}
p_1(u_j)\ge 2^{j+1}C_{n+1}D_{n-1}\sup_{m< \Delta_n}p_n(T^m u_0).
\end{equation} 
\end{enumerate}
Then for every $n\ge 1$, every polynomial $P$ with $\text{\emph{val}}(P)\ge 1$, $\deg(P)< \Delta_{n+1}$ and $|P|\le D_{n}$, every $x\in \overline{\text{\emph{span}}}\{e_j:\sigma(j)\ge \Delta_{n+1}\}$, we have
\[p_{N_n}(P(T)x)\le 4p_{N_n+2}(x).\]  
\end{lemma}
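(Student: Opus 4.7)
Set $N=N_n$. The plan is to reduce the estimate to bounding $p_N(T^r u_j)$ for each $j\ge\Delta_{n+1}$ and $1\le r<\Delta_{n+1}$, and then to sum over $j$. Expanding $x=\sum_{\sigma(l)\ge\Delta_{n+1}}x_l e_l$ in the Schauder basis and writing $y_j=x_{\sigma^{-1}(j)}\lambda_j$, so that $x=\sum_{j\ge\Delta_{n+1}}y_j u_j$, the continuity of $T$ (Lemma~\ref{cont}) and of $p_N$, together with truncation to finite partial sums, give
\[p_N(P(T)x)\le \sum_{j\ge \Delta_{n+1}}|y_j|\sum_{r}|\rho_r|\,p_N(T^r u_j).\]
Moreover \eqref{basis} yields $|y_j|p_{N+1}(u_j)=p_{N+1}(x_{\sigma^{-1}(j)}e_{\sigma^{-1}(j)})\le 2C_{N+1}p_{N+2}(x)$, so it suffices to bound $\sum_r|\rho_r|\,p_N(T^r u_j)$ by a multiple of $p_{N+1}(u_j)/C_{N+1}$ that is summable over $j\ge\Delta_{n+1}$.

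The heart of the argument is a case analysis for $j\in[\Delta_\mu,\Delta_{\mu+1})$ with $\mu\ge n+1$. Because $r<\Delta_{n+1}\le\Delta_\mu$ and $2\Delta_\mu<a_\mu$, the orbit $u_j,Tu_j,\dots,T^r u_j$ can cross at most one of the distinguished indices $a_\mu-1$ and $\Delta_{\mu+1}-1$. This produces three subcases: (i) no crossing, in which case $T^r u_j=u_{j+r}$; (ii) crossing at $a_\mu-1$, forcing $j\in(a_\mu-\Delta_\mu,a_\mu)$ and $T^r u_j=u_{j+r}+T^{s}u_0$ with $s=j+r-a_\mu<\Delta_\mu$; (iii) crossing at $\Delta_{\mu+1}-1$, giving $T^r u_j=u_{j+r}-u_{j+r-a_\mu}$ with $j+r\in[\Delta_{\mu+1},2\Delta_{\mu+1})$ and $j+r-a_\mu\in[\Delta_\mu,2\Delta_\mu)$.

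In case (iii), both surviving vectors sit in the left half of a block, so \eqref{tail2} applied at levels $\mu$ and $\mu+1$ together with $N\le n<\mu$ forces $p_N(T^r u_j)=0$. In case (ii), \eqref{tail1} controls $p_N(u_{j+r})$, while \eqref{tail3} controls $p_N(T^s u_0)\le p_\mu(T^s u_0)$ via $p_1(u_j)\le p_{N+1}(u_j)$, each by $\frac{1}{2^{j+1}C_{\mu+1}D_{\mu-1}}p_{N+1}(u_j)$. Case (i) is handled by \eqref{tail1} for $j\in[\Delta_\mu,a_\mu)$ with the same factor and by \eqref{tail1bis} for $j\in[a_\mu,\Delta_{\mu+1})$ with the stronger factor $\frac{1}{\Delta_\mu 2^{\Delta_\mu+1}C_{\mu+1}D_{\mu-1}}$. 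Using $|P|\le D_n\le D_{\mu-1}$ and $C_{N+1}\le C_{\mu+1}$, each $j\in[\Delta_\mu,a_\mu)$ contributes at most $2^{1-j}p_{N+2}(x)$ and each $j\in[a_\mu,\Delta_{\mu+1})$ contributes at most $(\Delta_\mu 2^{\Delta_\mu})^{-1}p_{N+2}(x)$ (or zero when case (iii) applies).

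Summing over $j$ within block $\mu$ gives $O(2^{-\Delta_\mu})p_{N+2}(x)$, and since $\Delta_{\mu+1}=a_\mu+\Delta_\mu>3\Delta_\mu$, the sequence $(\Delta_\mu)$ grows at least geometrically, so $\sum_{\mu\ge n+1}2^{-\Delta_\mu}\ll 1$ and the target $p_N(P(T)x)\le 4p_{N+2}(x)$ follows comfortably. The main obstacle is the case analysis: one must confirm that an orbit of length $r<\Delta_\mu$ starting in block $\mu$ admits at most one crossing of a distinguished index, and then that each type of crossing is absorbed by the right hypothesis, in particular that the second-crossing case is annihilated by the left-half vanishing of \eqref{tail2} while the first-crossing tail $T^s u_0$, which may itself branch many times inside lower blocks, is uniformly controlled by the single bound \eqref{tail3}. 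A secondary technical point is the interchange of $p_N$ with the infinite Schauder sum, which relies on continuity of $T$ from Lemma~\ref{cont} together with continuity of the coordinate functionals of the basis.
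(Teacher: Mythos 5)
Your proof is correct and follows essentially the same route as the paper's: decompose $x$ in the reordered basis, reduce to bounding $p_{N_n}(T^r u_j)$ for $j$ in block $\mu\ge n+1$ via a case analysis on whether and where the orbit crosses a distinguished index ($a_\mu-1$ or $\Delta_{\mu+1}-1$), use \eqref{tail1}/\eqref{tail1bis} for the shift term, \eqref{tail3} to absorb the returning tail $T^s u_0$, and \eqref{tail2} to kill the double-$u$ term, then sum with \eqref{basis}. The only cosmetic difference is that the paper keeps the two no-crossing subcases separate and writes the final sum with a slight over-count rather than a block-by-block geometric estimate; the content is the same.
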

\begin{proof}
Let $n\ge 2$ and $j\in [\Delta_n,\Delta_{n+1})$. Let $2\le m\le n$ and $1\le r< \Delta_m$. If $j\in [\Delta_n,a_n-r)$ then it follows from \eqref{tail1} that
\[p_{N_{m-1}}(T^ru_j)=p_{N_{m-1}}(u_{j+r})\le \frac{1}{2^{j+1}C_{N_{m-1}+1}D_{m-1}}p_{N_{m-1} +1}(u_j).\]
If $j\in [a_{n}-r,a_{n})$ then it follows from \eqref{tail1} and \eqref{tail3} that
\begin{align*}
p_{N_{m-1}}(T^ru_j)&=p_{N_{m-1}}(u_{j+r}+T^{j+r-a_{n}}u_0)\\
&\le  \frac{1}{2^{j+1}C_{N_{m-1}+1}D_{m-1}}p_{N_{m-1} +1}(u_j)+
\frac{1}{2^{j+1}C_{N_{m-1}+1}D_{m-1}}p_{1}(u_j)\\
&\le  \frac{1}{2^jC_{N_{m-1}+1}D_{m-1}}p_{N_{m-1}+1}(u_j)
\end{align*}
since $N_{m-1}\le m-1$ and $(C_j)_j$ and $(D_n)_n$ are increasing.
On the other hand, if $j\in [a_{n},a_{n}+\Delta_n-r)$ then it follows from \eqref{tail1bis} that
\[p_{N_{m-1}}(T^ru_j)=p_{N_{m-1}}(u_{j+r})\le \frac{1}{\Delta_n2^{\Delta_n+1}C_{N_{m-1}+1}D_{m-1}}p_{N_{m-1} +1}(u_j).\]
Finally, if $j\in [a_n+\Delta_n-r,a_n+\Delta_n)$ then  $j+r\in [a_n+\Delta_n,a_{n+1})$ since $a_{n}+2\Delta_n<2\Delta_{n+1}<a_{n+1}$ by \eqref{param} and it follows from \eqref{tail2} that
\begin{align*}
p_{N_{m-1}}(T^ru_j)=p_{N_{m-1}}(u_{j+r}-u_{j+r-a_n})=0
\end{align*}
since $j+r\in [\Delta_{n+1},2\Delta_{n+1})$ and $j+r-a_n\in [\Delta_{n},2\Delta_{n})$.\\

Let $n\ge 1$ and let $P$ be a polynomial with $\text{val}(P)\ge 1$, $\deg(P)< \Delta_{n+1}$ and $|P|\le D_{n}$. It follows from the previous inequalities that if $x=\sum_{j:\sigma(j)\ge \Delta_{n+1}}x_je_j$ then
\begin{align*}
p_{N_n}(P(T)x)&\le |P|\max_{1\le r< \Delta_{n+1}}p_{N_n}(T^rx)\\
&\le D_n\sum_{j:\sigma(j)\ge \Delta_{n+1}}|x_j|\max_{1\le r< \Delta_{n+1}}p_{N_n}(T^re_j)\\
&\le D_n\left(\sum_{j:\sigma(j)\ge \Delta_{n+1}}\frac{1}{2^{\sigma(j)}C_{N_n+1}D_{n}}|x_j|p_{N_n+1}(e_j)\right.\\
&\quad\quad\left.
+\sum_{m\ge n+1}\sum_{j:\sigma(j)\in [a_{m},a_{m}+\Delta_m)}\frac{1}{\Delta_m 2^{\Delta_m+1}C_{N_n+1}D_{n}}|x_j|p_{N_n+1}(e_j)\right)\\
&\le \left(\sum_{j:\sigma(j)\ge \Delta_{n+1}}\frac{1}{2^{\sigma(j)-1}}+ \sum_{m\ge n+1}\frac{1}{2^{\Delta_m}}\right) p_{N_n+2}(x)\le 4p_{N_n+2}(x).
\end{align*}
\end{proof}
\begin{remark}
In the same way as for condition \eqref{finalcont1bis} in Lemma~\ref{cont}, the reason to consider condition \eqref{tail1bis} relies on the fact that the value of $a_n$ will not be known when we will fix the elements $u_j$ for $j\in [a_n,\Delta_{n+1})$.
\end{remark}

Thanks to all these lemmas, we are now able to write a list of conditions such that if the sequences $(a_n)_{n\ge 1}$ and $(u_n)_{n\ge 0}$ satisfy each of these conditions then $T$ admits no non-trivial invariant subspace.

\begin{lemma}[Final result]\label{finalresult}
Under the assumptions \eqref{tn}-\eqref{tn2}, \eqref{finalcont1}-\eqref{finalcont4}, \eqref{K1} and \eqref{tail1}-\eqref{tail3}, if for every $n\ge 1$, we have
\begin{equation}
\label{final1}
\|u_{a_n}\|_{N_n}\le \frac{1}{2^n}
\end{equation}
and for every  $\Delta_{n+1}\le j< 2\Delta_{n+1}$, we have
\begin{equation}
\label{final2}
\|u_j\|_{N_{n}}\le \frac{1}{2^{n}D_{n}}
\end{equation}
 then every non-zero vector in $X$ is cyclic for $T$ and thus $T$ has no non-trivial invariant subspace.
\end{lemma}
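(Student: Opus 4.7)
The plan is to show that every nonzero $x\in X$ is cyclic for $T$, which immediately gives that $T$ has no nontrivial invariant subspace. Since $u_0$ is cyclic (its iterates span $\text{span}\{u_j:j\ge 0\}=\text{span}\{e_n:n\ge 0\}$, dense in $X$) and $T$ is continuous by Lemma~\ref{cont}, it suffices to exhibit, for each $N\ge 1$ and $\varepsilon>0$, a polynomial $Q$ with $p_N(Q(T)x-u_0)<\varepsilon$: once $u_0\in\overline{\text{span}}\{T^nx:n\ge 0\}$, so does the whole orbit of $u_0$, hence all of $X$.

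To construct $Q$, I would decompose $x=\pi^{(u)}_{[0,\Delta_{n+1})}x+r_n$ with $r_n=\sum_{l:\sigma(l)\ge\Delta_{n+1}}x_le_l\in\overline{\text{span}}\{e_l:\sigma(l)\ge\Delta_{n+1}\}$, and restrict $n$ to the infinite set $\{n:N_n=N\}$. Lemma~\ref{Kn} supplies $M>0$ and scalars $M_n\ge M$ with $\pi^{(u)}_{[0,\Delta_{n+1})}x/M_n\in K_n$ for all but finitely many admissible $n$, and Corollary~\ref{cor Pn} then produces a polynomial $P_{n,k_n}$ (with $\text{val}\ge 1$, degree $<\Delta_{n+1}$, $|P_{n,k_n}|\le D_n$) satisfying
\[\|P_{n,k_n}(T)(\pi^{(u)}_{[0,\Delta_{n+1})}x/M_n)-u_0\|_N\le 2\|u_{a_n}\|_N+D_n\max_{\Delta_{n+1}\le j<2\Delta_{n+1}}\|u_j\|_N,\]
which by \eqref{final1}--\eqref{final2} is bounded by $3/2^n$; since $p_N\le\|\cdot\|_N$ from the definition \eqref{normN} of the norm, the same bound transfers to $p_N$. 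My candidate polynomial is $Q:=P_{n,k_n}/M_n$.

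Splitting the error as
\[p_N(Q(T)x-u_0)\le p_N\bigl(P_{n,k_n}(T)(\pi^{(u)}_{[0,\Delta_{n+1})}x/M_n)-u_0\bigr)+\frac{1}{M_n}p_N(P_{n,k_n}(T)r_n),\]
the first summand is at most $3/2^n$ from the previous step, and the second, by Lemma~\ref{tail} applied to $r_n$ and $P_{n,k_n}$, is at most $(4/M)p_{N+2}(r_n)$. The main obstacle is then to show $p_{N+2}(r_n)\to 0$, which in the absence of a continuous norm is not automatic because the reordering $(u_j)$ need not be an unconditional basis. I plan to reduce this to the ordinary partial-sum convergence of the Schauder basis $(e_n)$: by \eqref{tn}--\eqref{tn2} applied at index $n+1$, the difference $\pi^{(u)}_{[0,\Delta_{n+1})}x-\pi^{(e)}_{[0,s_{n+1}]}x$ is a linear combination of basis vectors lying in $\ker p_{n+3}$, so $r_n-(x-\pi^{(e)}_{[0,s_{n+1}]}x)\in\ker p_{n+3}\subseteq\ker p_{N+2}$ whenever $n\ge N-1$, giving $p_{N+2}(r_n)=p_{N+2}(x-\pi^{(e)}_{[0,s_{n+1}]}x)\to 0$ because $s_n\to\infty$ and $(e_n)$ is a Schauder basis. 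Taking $n$ sufficiently large along $\{n:N_n=N\}$ drives the total bound below $\varepsilon$ and completes the proof.
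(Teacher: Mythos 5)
Your proposal is correct and follows essentially the same route as the paper's proof: the same decomposition $x=\pi^{(u)}_{[0,\Delta_{n+1})}x+\pi^{(u)}_{[\Delta_{n+1},+\infty)}x$ along a subsequence with $N_n=N$, the same use of Lemma~\ref{Kn} and Corollary~\ref{cor Pn} to control the head, Lemma~\ref{tail} for the remainder, and the same appeal to \eqref{tn}--\eqref{tn2} together with $N=N_n\le n$ to identify $p_{N+2}$ of the $(u_j)$-tail with $p_{N+2}$ of the $(e_n)$-tail, which vanishes by Schauder-basis convergence.
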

\begin{proof}
Let $x\in X\backslash\{0\}$. Let $N\ge 1$. Since $u_0$ is a cyclic vector for $T$, it suffices to show that there exists a sequence of polynomial $(Q_k)_k$ such that $p_N(Q_k(T)x-u_0)\to 0$ in order to deduce that $x$ is cyclic for $T$.

Let $(n_k)$ be an increasing sequence such that $N_{n_k}=N$. By Lemma~\ref{Kn}, there exists $M>0$ such that for all but finitely many $k$, there exists $M_k\ge M$ such that
\[y:=\frac{\pi^{(u)}_{[0,\Delta _{n_k +1})}x}{M_k}\in K_{n_k}.\]
By Corollary~\ref{cor Pn}, there then exists a polynomial $P_{n_k,l}$ with $\text{val}(P_{n_k,l})\ge 1$, $\deg(P_{n_k,l})< \Delta_{n_k +1}$ and $|P_{n_k,l}|\le D_{n_k}$ such that 
\begin{align*}
p_{N}(P_{n_k,l}(T)y-u_0)&\le \|P_{n_k,l}(T)y-u_0\|_{N}\\
&\le 2\|u_{a_{n_k}}\|_{N}+D_{n_k}\max_{\Delta_{n_k+1}\le j< 2\Delta_{n_k +1}}\|u_j\|_{N}\\
&\le \frac{3}{2^{n_k}}\quad \text{by \eqref{final1} and \eqref{final2}}.
\end{align*}
On the other hand, it follows from Lemma~\ref{tail} that
\[p_{N}(P_{n_k,l}(T)\pi^{(u)}_{[\Delta_{n_k +1},+\infty)}x)\le 4 p_{N+2}(\pi^{(u)}_{[\Delta_{n_k +1},+\infty)}x)\]
where $\pi^{(u)}_{[\Delta_{n_k +1},+\infty)}x$ is given by $x-\pi^{(u)}_{[0,\Delta_{n_k +1})}x$.
Since $N_n\le n$ for every $n\ge 1$, we conclude thanks to \eqref{tn2} that
\begin{align*}
p_N(\frac{1}{M_k}P_{n_k,l}(T)x-u_0)&\le p_N(P_{n_k,l}(T)y-u_0)+ p_N(\frac{1}{M_k}P_{n_k,l}(T)(\pi^{(u)}_{[\Delta_{n_k +1},+\infty)}x))\\
&\le \frac{3}{2^{n_k}}+\frac{4}{M}p_{N+2}(\pi^{(u)}_{[\Delta_{n_k +1},+\infty)}x)\\
&=\frac{3}{2^{n_k}}+\frac{4}{M}p_{N+2}(\pi^{(e)}_{(s_{n_k +1},+\infty)}x)\to 0,
\end{align*}
where $\pi^{(e)}_{(s_{n_k +1},+\infty)}x=\sum_{l=s_{n_k +1}+1}^{+\infty}x_le_l$ if $x=\sum_{l=0}^{\infty}x_le_l$.
Each non-zero vector in $X$ is thus cyclic for $T$ and therefore $T$ does not possess any non-trivial invariant subspace.
\end{proof}

We are now able to conclude the proof of Theorem~\ref{technique}.

\begin{proof}[Proof of Theorem~\ref{technique}]
We first recall that it is already known that if there exists $j_0\ge 1$ such that $\ker p_{j+1}$ is a subspace of finite codimension in $\ker p_j$ for every $j\ge j_0$ then $X$ satisfies the Invariant Subspace Property~(\cite[Theorem 2.1]{Menet}).

On the other hand, if we assume that $(X,(p_j))$ is a Fréchet space with a Schauder basis $(e_n)_{n\ge 0}$  such that $\ker p_{j+1}$ is of infinite codimension in $\ker p_{j}$ for infinitely many $j$, then we can assume without loss of generality that \eqref{basis} and \eqref{ker} are satisfied and in view of Lemma~\ref{finalresult}, it remains to prove that it is possible to construct a sequence $(a_n)_{n\ge 1}$ and a sequence $(u_j)_{j\ge 0}$ satisfying \eqref{tn}-\eqref{tn2}, \eqref{finalcont1}-\eqref{finalcont4}, \eqref{K1}, \eqref{tail1}-\eqref{tail3} and \eqref{final1}-\eqref{final2}. 
 
To this end, we first let $u_0=e_{i_0}$ and $u_1=e_{i_1}$ with $i_0\ne i_1$ and $i_0, i_1\in E_1$ so that \eqref{finalcont3} is satisfied for $n=1$ and \eqref{finalcont4} is satisfied for $j=0$. We recall that $\Delta_0=0$ and that $\Delta_1=1$. We then select an index $i_{a_1}\notin\{i_0,i_1\}$ with $i_{a_1}\in E_{N_1}=E_1$ sufficiently big so that by letting $u_{a_1}=\frac{2^{\Delta_1}\|u_0\|_{1}}{p_{N_1+1}(e_{i_{a_1}})}e_{i_{a_1}}$, we get \eqref{finalcont3}, \eqref{K1} and \eqref{final1}, i.e.
\[p_2(u_{a_1})>0,\quad p_{N_1+1}(u_{a_1})\ge 2^{\Delta_1}\|u_0\|_1\quad\text{and}\quad \|u_{a_1}\|_{N_1}=\|u_{a_1}\|_{1}=\frac{2^{\Delta_1}\|u_0\|_1}{2^{i_{a_1}}C_2}\le \frac{1}{2}.\]
Note that the choice of the index $i_{a_1}$ is independent of the value of $a_1$.
We now select $4$ distinct indices $\mathcal{K}=\{k_0,k_{1},k_2,k_{3}\}$ different to $i_0$, $i_1$ and $i_{a_1}$ with $k_l\in E_l$. Let $s_2=\max\{i_0,i_1,i_{a_1},k_0,k_1,k_2,k_3\}$. We can complete the family $\mathcal{K}$ so that
\begin{itemize}
\item $\mathcal{K}\cap\{i_0,i_1,i_{a_1}\}=\emptyset$
\item $\mathcal{K}\cup\{i_0,i_1,i_{a_1}\}\supset [0,s_2]$,
\item if $j\in \mathcal{K}$ and $j>s_2$ then $j\in E_l$ for some $l\ge 4$,
\item for every $l\ge 1$, if $\mathcal{K}\cap E_l\ne \emptyset$ then $\mathcal{K}\cap E_m\ne \emptyset$ for every $m< l$. 
\end{itemize}
We then let $a_1=|\mathcal{K}|+2$ and for each $n\in(1,a_1)$, we take for $u_n$ a non-zero multiple of $e_s$ with $s\in \mathcal{K}$ so that \eqref{finalcont1}, \eqref{finalcont2} and \eqref{finalcont4} are satisfied. More precisely, we consider an enumeration $(i_j)_{1<j<a_1}$ of $\mathcal{K}$ such that if $i_j\in E_s$ then $i_{j+1}\in E_s\cup E_{s-1}$ and we let  $u_j=\alpha_j e_{i_j}$. We can then deduce that \eqref{finalcont4} is satisfied for every $j< a_1$ and we can choose $\alpha_{j}$ sufficiently big to get \eqref{finalcont1} for every $\Delta_1\le j\le a_1-1$ and to get \eqref{finalcont2} for $n=1$ since $i_{a_1-1}\in E_0$. We conclude that the family $\{u_0,\dots,u_{\Delta_2 -1}\}$ satisfies \eqref{tn}-\eqref{tn2}, \eqref{finalcont1}-\eqref{finalcont4}, \eqref{K1} and  \eqref{final1}. Note that conditions \eqref{tail1}-\eqref{tail3} and \eqref{final2} only concern the indices bigger than $\Delta_2$.

We continue the construction of $(u_k)_{k\ge 0}$ by induction along the intervals $[\Delta_n,\Delta_{n+1})$. We thus assume that  $u_0,...u_{\Delta_{n}-1}$ have been chosen for some $n\ge 2$ so that \eqref{tn}- \eqref{tn2}, \eqref{finalcont1}-\eqref{finalcont4}, \eqref{K1}, \eqref{tail1}-\eqref{tail3} and \eqref{final1}-\eqref{final2} are satisfied. We will divide the selection of elements $u_j$ for $j\in [\Delta_n,\Delta_{n+1})$ into three steps: $[\Delta_n,2\Delta_n)$, $[a_{n},a_{n}+\Delta_n)$ and $[2\Delta_n,a_n)$.

Let $D_{n-1}$ be the real number given by Corollary~\ref{cor Pn}. We first select $\Delta_n$ distinct indices $(i_j)_{j\in [\Delta_n,2\Delta_n)}$ different from the indices already selected such that $i_j\in E_{n}$ for every $j\in [\Delta_n,2\Delta_n)$ and we let $u_j=\alpha_j e_{i_{j}}$ with $\alpha_j\ne 0$.
It follows that \eqref{finalcont1} and \eqref{finalcont4} are satisfied for every $j\in [\Delta_n,2\Delta_n-1)$, that \eqref{tail2} and the first two conditions of \eqref{finalcont3} are satisfied and that \eqref{tail1} is satisfied for $j+r<2\Delta_n$. Moroever, we can choose $\alpha_j$ sufficiently small so that $\eqref{final2}$ is satisfied.

We complete by choosing the elements $u_j$ for $j\in [a_{n},a_{n}+\Delta_n)$. We first select $\Delta_{n}-1$ distinct indices $(i_j)_{j\in (a_{n},a_{n}+\Delta_n) }$ different from the indices already selected such that $i_j\in E_{N_n}$ and we let $u_j=\alpha_j e_{i_{j}}$ for every $j\in (a_{n},a_n+\Delta_n)$. It follows that the last condition of \eqref{finalcont3} is satisfied and that \eqref{finalcont4} is satisfied for every $j\in (a_n,a_n+\Delta_n-1)$.  Moreover, we can choose $\alpha_j$ sufficiently big so that \eqref{K1} is satisfied, that \eqref{finalcont1bis} is satisfied for $j\in (a_n,\Delta_{n+1}-1)$ and that \eqref{tail1bis} is satisfied for $j\in (a_n,\Delta_{n+1})$ (even if the value of $a_n$ has not still be determined). We then select a sufficiently big index $i_{a_n}\in E_{N_n}$  such that by letting 
\[u_{a_{n}}=\big(\Delta_n2^{\Delta_n+1}C_{n+1}D_{n-1}\max\{p_{n}(u_j):j\in (a_n,\Delta_{n+1})\}+2^{\Delta_{n}}\|u_0\|_1\big)\frac{e_{i_{a_n}}}{p_{N_n+1}(e_{i_{a_n}})},\] we get \eqref{finalcont1bis}, \eqref{K1}, \eqref{tail1bis} and \eqref{final1}, i.e. for every $1\le l\le n$, every $j\in (a_n,\Delta_{n+1})$,
\[p_{l+1}(u_{a_n})\ge \Delta_n 2^{\Delta_n}p_{l}(u_{a_n+1}),\quad p_{N_n+1}(u_{a_n})\ge 2^{\Delta_n}\|u_0\|_1,\]
\[p_{l+1}(u_{a_n})\ge \Delta_n 2^{\Delta_n+1}C_{n+1}D_{n-1}p_{l}(u_j)\]
and
\[\|u_{a_{n}}\|_{N_n}=\frac{\big(\Delta_n2^{\Delta_n+1}C_{n+1}D_{n-1}\max\{p_{n}(u_j):j\in (a_n,\Delta_{n+1})\}+2^{\Delta_{n}}\|u_0\|_1\big)}{2^{i_{a_n}}C_{N_n+1}}\le \frac{1}{2^n}.\]
Moreover, \eqref{finalcont4} is satisfied for $j=a_n$ since $i_{a_n}$ and $i_{a_{n}+1}$ belong to $E_{N_n}$. 

We are now looking for the elements $u_j$ for $j\in [2\Delta_{n},a_n)$. Let $I_n$ be the set of already selected indices.  We select $(n+3)\Delta_{n}$ distinct indices $\mathcal{K}=\{k_{0},...,k_{(n+3)\Delta_n-1}\}$ such that $\mathcal{K}\cap I_n=\emptyset$ and such that $k_{l\Delta_n+r}\in E_l$ for every $0\le l\le n+2$ and $0\le r<\Delta_n$. Let $s_{n+1}$ be the maximum of $\mathcal{K}\cup I_n$. We can now complete the family $\mathcal{K}$ so that
\begin{itemize}
\item $\mathcal{K}\cap I_n=\emptyset$,
\item $\mathcal{K}\cup I_n\supset [0,s_{n+1}]$,
\item if $j\in \mathcal{K}$ and $j>s_{n+1}$ then $j\in E_l$ for some $l\ge n+3$,
\item for every $l\ge 1$, if $\mathcal{K}\cap E_l\ne \emptyset$ then $\mathcal{K}\cap E_m\ne \emptyset$ for every $m< l$.
\end{itemize} 
We then let $a_{n}=|\mathcal{K}|+2\Delta_n$ and for each $j\in[2\Delta_n,a_n)$, we take for $u_j$ a multiple of $e_s$ where $s\in \mathcal{K}$ so that \eqref{finalcont1}, \eqref{finalcont2}, \eqref{finalcont4}, \eqref{tail1} and \eqref{tail3} are satisfied. More precisely, we consider an enumeration $(i_j)_{2\Delta_n\le j<a_n}$ of $\mathcal{K}$ such that if $i_j\in E_s$ then $i_{j+1}\in E_s\cup E_{s-1}$ and we let  $u_j=\alpha_j e_{i_j}$. 
Since for every $s\le n+2$, the set $\mathcal{K}$ contains at least $\Delta_n$ indices in $E_s$, we can deduce that  $i_j\in E_0$ for every $j\in [a_n-\Delta_n,a_n)$ and that
 $i_j\in \bigcup_{s\ge n+2}E_s$ for every $j\in [2\Delta_n,3\Delta_n)$. It follows that for any choice of $(\alpha_i)_{i\in [2\Delta_n,a_n)}$, \eqref{finalcont4} is satisfied for every $j<\Delta_{n+1}$, \eqref{finalcont1} is satisfied for $j=2\Delta_{n}-1$ and \eqref{tail1} is satisfied for every $j\in [\Delta_n,2\Delta_n)$. We can then choose $\alpha_{i}$ sufficiently big for every $i\in [2\Delta_n,a_n)$ so that we get \eqref{finalcont2} and \eqref{tail3} but also \eqref{finalcont1} and \eqref{tail1} for every $j\in [\Delta_n, a_n)$. We conclude that the family $\{u_0,\dots,u_{\Delta_n+1 -1}\}$ satisfies \eqref{tn}-\eqref{tn2}, \eqref{finalcont1}-\eqref{finalcont4}, \eqref{K1}, \eqref{tail1}-\eqref{tail3} and  \eqref{final1}-\eqref{final2}.

It follows from Lemma~\ref{finalresult} that $X$ does not satisfy the Invariant Subspace Property.
\end{proof}

We end this paper by showing how Theorem~\ref{technique} leads to the characterization of spaces $X^{\mathbb{N}}$ satisfying the Invariant Subspace Property.

\begin{proof}[Proof of Corollary~\ref{cor}]
Let $(X,(q_j)_j)$ be a Fréchet space with a Schauder basis where we assume that $(q_j)$ is an increasing sequence. It follows that $X^{\mathbb{N}}$ endowed with the seminorms $p_j((x_n)_n)=\max\{q_j(x_n):n\le j\}$ is a Fréchet space with a Schauder basis and without continuous norm since none of the seminorms $p_j$ is a norm. Moreover, the sequence $(p_j)$ is increasing and we can thus apply Theorem~\ref{technique} to the space $(X^{\mathbb{N}},(p_j))$.

If $X$ is finite-dimensional or if $X$ is isomorphic to $\omega$ then for every $j\ge 1$, $\ker p_{j+1}$ is of finite codimension in $X$ and thus in particular in $\ker p_j$ and it follows from Theorem~\ref{technique} that $X^{\mathbb{N}}$ satisfies the Invariant Subspace Property. On the other hand, if $X$ is infinite dimensional but is not isomorphic to $\omega$ then there exists $j_0\ge 1$ such that $\ker q_{j_0}$ is of infinite codimension in $X$ and it follows that for every $j\ge j_0$, $\ker p_{j+1}$ is of infinite codimension in $\ker p_j$. We can then conclude by applying Theorem~\ref{technique}.
\end{proof}

\begin{remark}
Corollary~\ref{cor} means that if $X$ is a Fréchet space with a Schauder basis then $X^{\mathbb{N}}$ satisfies the Invariant Subspace Property if and only if $X^{\mathbb{N}}$ is isomorphic to $\omega$.
\end{remark}

\end{document}